\setlist[enumerate]{label={\upshape(\roman*)}}
\newcommand{\abs}[1]{\ensuremath{\left\lvert #1 \right\rvert}}
\newcommand{\angles}[1]{\ensuremath{\left\langle #1 \right\rangle}}
\newcommand{\sangles}[1]{\ensuremath{\langle #1 \rangle}}
\mathchardef\expandafter\varphi\number\expandafter\phi\expandafter\relax
\mathchardef\expandafter\phi\number\varphi
\numberwithin{equation}{section}
\newtheorem{theorem}{Theorem}[section]
\newtheorem{corollary}[theorem]{Corollary}
\newtheorem{lemma}[theorem]{Lemma}
\newtheorem{proposition}[theorem]{Proposition}
\newtheoremstyle{named}{}{}{\itshape}{}{\bfseries}{.}{.5em}{\thmnote{#3}}
\theoremstyle{named}
\theoremstyle{definition}
\newtheorem{remark}[theorem]{Remark}
\newtheorem{definition}[theorem]{Definition}
\newtheorem{example}[theorem]{Example}
\DeclareMathOperator{\rank}{rank}
\DeclareMathOperator{\dist}{dist}
\DeclareMathOperator{\Lim}{Lim^1}
\DeclareMathOperator{\diag}{diag}
\DeclareMathOperator{\trace}{Tr}
\DeclareMathOperator{\spans}{span}
\DeclareMathOperator{\ind}{ind}
\DeclareMathOperator{\spec}{\sigma}
\newcommand{\essspec}{\spec_{\mathrm{ess}}}
\newcommand{\Hil}{\ensuremath{\mathcal{H}}}
\newcommand{\vmid}{\,\middle\vert\,}
\renewcommand{\epsilon}{\varepsilon}
\begin{document}

\title[Restricted diagonalization of finite spectrum normal operators]{Restricted diagonalization of \\ finite spectrum normal operators  \\ and a theorem of Arveson}
\author{Jireh Loreaux}
\email{\href{mailto:jloreau@siue.edu}{jloreau@siue.edu}}
\address{Southern Illinois University Edwardsville \\
  Department of Mathematics and Statistics \\
  Edwardsville, IL, 62026-1653 \\
  USA}

\keywords{Essential codimension, diagonals of projections, diagonals of normal operators}
\subjclass{Primary 47B15, 47A53; Secondary 46C05.}

\begin{abstract}
  Kadison characterized the diagonals of projections and observed the presence of an integer.
  Arveson later recognized this integer as a Fredholm index obstruction applicable to any normal operator with finite spectrum coincident with its essential spectrum whose elements are the vertices of a convex polygon.
  Recently, in joint work with Kaftal, the author linked the Kadison integer to essential codimension of projections.

  This paper provides an analogous link between Arveson's obstruction and essential codimension as well as a new approach to Arveson's theorem which also allows for generalization to any finite spectrum normal operator.
  In fact, we prove that Arveson's theorem is a corollary of a trace invariance property of arbitrary normal operators.
  An essential ingredient is a formulation of Arveson's theorem in terms of diagonalization by a unitary which is a Hilbert--Schmidt perturbation of the identity.
\end{abstract}

\maketitle

\section{Introduction}

A \emph{diagonal} of a bounded linear operator $T \in B(\Hil)$ is a sequence of inner products $\big(\langle Te_n,e_n \rangle\big)$ where $\{e_n\}_{n=1}^{\infty}$ is an orthonormal basis for the Hilbert space $\Hil$.
In other words, a diagonal of $T$ is the diagonal of some matrix representation of $T$ with respect to an orthonormal basis.

In his seminal papers on the Pythagorean Theorem \cite{Kad-2002-PNASU,Kad-2002-PNASUa} Kadison proved the following characterization of diagonals of projections.

\begin{theorem}[Kadison]
  \label{thm:kadison-carpenter-pythagorean}
  A sequence $(d_n)$ is the diagonal of a projection $P$ if and only if it takes values in the unit interval and the quantities
  \begin{equation*}
    a := \sum_{d_n < \nicefrac{1}{2}} d_n \quad\text{and}\quad b := \sum_{d_n \ge \nicefrac{1}{2}} (1-d_n)
  \end{equation*}
  satisfy one of the mutually exclusive conditions
  \begin{enumerate}
  \item $a+b = \infty$;
  \item\label{item:a+b-finite} $a+b < \infty$ and $a-b \in \mathbb{Z}$.
  \end{enumerate}
\end{theorem}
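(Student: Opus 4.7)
Suppose $P$ is a projection with diagonal $(d_n)$. The bound $d_n \in [0,1]$ is immediate from $d_n = \angles{P^2 e_n, e_n} = \norm{P e_n}^2 \le \norm{P}^2 = 1$. Isolating the diagonal term $d_n^2$ in $\norm{P e_n}^2 = \sum_m \abs{\angles{P e_n, e_m}}^2$ yields the identity $\sum_{m \neq n} \abs{\angles{P e_n, e_m}}^2 = d_n(1 - d_n)$, whence $\sum_n d_n(1-d_n) \le a + b$. Now suppose $a + b < \infty$ and let $Q$ denote the projection onto $\overline{\spans}\{e_n : d_n \ge \nicefrac{1}{2}\}$. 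A direct computation yields $\norm{P - Q}_2^2 = a + b$, so $P - Q$ is Hilbert--Schmidt---hence compact---and its diagonal is absolutely summable with $\sum_n \angles{(P-Q) e_n, e_n} = a - b$. To conclude $a - b \in \mathbb{Z}$, exploit the anti-commutation $(P-Q)(I-P-Q) = -(I-P-Q)(P-Q)$ together with $(I-P-Q)^2 = I - (P-Q)^2$: these force the nonzero spectrum of $P-Q$ to pair as $\pm \lambda$, with multiplication by $I - P - Q$ intertwining the eigenspaces, except at $\pm 1$ where $I - P - Q$ vanishes. Compactness renders the $\pm 1$ eigenspaces finite-dimensional of dimensions $n_+, n_-$, and an essential-codimension argument identifies the diagonal sum $a - b$ with $n_+ - n_- \in \mathbb{Z}$.

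\textbf{Sufficiency.} Given $(d_n) \in [0,1]^{\mathbb{N}}$ satisfying the conditions, build $P$ as a direct sum of finite-rank projections. In the case $a + b < \infty$ with $a - b = k \in \mathbb{Z}$, cofinitely many $d_n$ cluster near $0$ or $1$, and one can partition $\mathbb{N}$ into finite blocks $B_1, B_2, \ldots$ on each of which $\sum_{n \in B_j} d_n \in \mathbb{Z}$ (the offset $k$ being absorbed into a single enlarged block). The classical finite-dimensional Schur--Horn theorem then furnishes on each block a projection of the appropriate integer rank whose diagonal is $(d_n)_{n \in B_j}$, and the direct sum produces the required $P$. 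In the case $a + b = \infty$, an analogous inductive construction succeeds: at each stage extract a small finite group of indices with integer $d$-sum (divergence of $a$ or $b$ guarantees this is always possible), apply Schur--Horn on the block, and iterate on the complement.

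\textbf{Main obstacle.} The chief difficulty is the integer conclusion in necessity. The subtlety is that $P - Q$ need only be Hilbert--Schmidt, not trace class, so the diagonal sum $\sum_n \angles{(P - Q) e_n, e_n}$ is not a genuine operator trace and its value is not a priori an integer. The argument requires exploiting the spectral pairing of projection differences to identify this basis-dependent sum with the Fredholm-theoretic integer $n_+ - n_-$---which is precisely the essential codimension connection advertised in the introduction. A secondary obstacle is the block-partition step in sufficiency when $a + b = \infty$: each index must be used exactly once, so the greedy extraction must be arranged to exhaust $\mathbb{N}$.
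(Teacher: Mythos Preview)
The paper does not prove Theorem~\ref{thm:kadison-carpenter-pythagorean}; it is quoted as Kadison's result, and the paper's only direct contribution toward it is Proposition~\ref{prop:kadison-integer-essential-codimension} (imported from \cite{KL-2017-IEOT}), which identifies $a-b$ with the essential codimension $[P:Q]$ once $P-Q$ is known to be Hilbert--Schmidt. Sufficiency is not treated in the paper at all, so there is nothing against which to compare your block-partition sketch.

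On necessity your preliminary steps are correct: the computation $\norm{P-Q}_2^2=a+b$, the anti-commutation $(P-Q)(I-P-Q)=-(I-P-Q)(P-Q)$, and the resulting spectral symmetry of $P-Q$ are standard and sound. But the step you yourself flag as the main obstacle is not actually carried out. You write that ``an essential-codimension argument identifies the diagonal sum $a-b$ with $n_+-n_-$,'' and that is a name for the missing step, not a proof of it. The spectral pairing only tells you that the paired $\pm\lambda$ eigenvalues would cancel in a genuine trace; it does not explain why the basis-dependent quantity $\sum_n\langle(P-Q)e_n,e_n\rangle$---well-defined here because the basis diagonalizes $Q$, not because $P-Q$ is trace-class---must coincide with the spectral integer $n_+-n_-$. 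Closing that gap is precisely the content of Proposition~\ref{prop:kadison-integer-essential-codimension} (or of the trace formulas in \cite{ASS-1994-JFA,AS-1994-LAA}), and it requires a genuine argument: for instance, one shows that the diagonal blocks $Q(P-Q)Q=-QP^{\perp}Q$ and $Q^{\perp}(P-Q)Q^{\perp}=Q^{\perp}PQ^{\perp}$ are each trace-class and then links $\trace(Q^{\perp}PQ^{\perp})-\trace(QP^{\perp}Q)=a-b$ to the Fredholm index of $QP:P\Hil\to Q\Hil$. As written, your proposal locates the difficulty accurately but does not resolve it.
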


The existence of the integer $a-b$ is not at all obvious and Kadison himself referred to it as ``curious.''
Since Kadison's initial paper, both Arveson \cite[Theorem~3]{Arv-2007-PNASU} and Argerami \cite[Theorem~4.6]{Arg-2015-IEOT} have provided new proofs that $a-b \in \mathbb{Z}$.
Recently, the author and Kaftal further clarified this integer in \cite{KL-2017-IEOT} as the essential codimension between the projection $P$ and a natural diagonal projection associated to $a,b$.
Essential codimension was developed by Brown, Douglas and Fillmore in \cite[Remark~4.9]{BDF-1973-PoaCoOT} (see also \autoref{def:essential-codimension} below) for pairs of projections whose difference is compact.

Arveson also recognized the Kadison integer as the index of a Fredholm operator in \cite{Arv-2007-PNASU}, and referred to it as an ``index obstruction'' to an arbitrary sequence with values in the unit interval being a diagonal of a projection.
Arveson was able to extend this index obstruction to any normal operator with finite spectrum coincident with its essential spectrum whose elements are the vertices of a convex polygon.
In order to state his main theorem, Arveson associated several objects to a finite set $X \subseteq \mathbb{C}$.

\begin{definition}
  \label{def:lim-x}
  For a finite set $X \subseteq \mathbb{C}$, the sequences which accumulate summably at $X$ are
  \begin{equation*}
    \Lim (X) := \left\{ (d_n) \in \ell^{\infty} \vmid \sum_{n=1}^{\infty} \dist(d_n,X) < \infty \right\}.
  \end{equation*}
\end{definition}

\begin{definition}
  \label{def:k_x}
  For a set $X = \{\lambda_1,\ldots,\lambda_m\} \subseteq \mathbb{C}$, let $K_X$ denote the $\mathbb{Z}$-module of linear combinations over $\mathbb{Z}$ of elements of $X$ whose coefficients sum to zero.
  This can also be expressed as the free $\mathbb{Z}$-module generated by $\lambda_1-\lambda_2, \ldots, \lambda_1-\lambda_m$.
\end{definition}

\begin{definition}
  \label{def:s-quotient}
  For a finite set $X \subseteq \mathbb{C}$ there is a natural map $s : \Lim(X) \to \mathbb{C}/K_X$.
  For $(d_n) \in \Lim(X)$, since $X$ is finite there are $x_n \in X$ for which $\abs{d_n - x_n} = \dist(d_n,X)$, and therefore the series $\sum_{n=1}^{\infty} (d_n - x_n)$ is absolutely summable.
  Arveson proved in \cite[Proposition~1]{Arv-2007-PNASU} that the coset of this sum in $\mathbb{C}/K_X$ is independent of the choices of $x_n \in X$, so the map
  \begin{equation*}
    s(d) := \sum_{n=1}^{\infty} (d_n - x_n) + K_X \in \mathbb{C}/K_X
  \end{equation*}
  is well-defined.
  The element $s(d)$ is called the \emph{renormalized sum} of $d$.
\end{definition}

We reproduce Arveson's theorem \cite[Theorem~4]{Arv-2007-PNASU} verbatim for reference.
Here, $\mathcal{N}(X)$ denotes the set of normal operators with finite spectrum $X$ coincident with their essential spectrum.

\begin{theorem}[Arveson]
  \label{thm:arveson-pythagorean}
  Let $X = \{\lambda_1,\ldots,\lambda_m\}$ be the set of vertices of a convex polygon $P \subseteq \mathbb{C}$ and let $d = (d_1,d_2,\ldots)$ be a sequence of complex numbers satisfying $d_n \in P$ for $n \ge 1$, together with the summability condition
  \begin{equation}
    \label{eq:summability-condition}
    \sum_{n=1}^{\infty} \abs{f(d_n)} < \infty,
  \end{equation}
  where $f(z) = (z-\lambda_1)(z-\lambda_2)\cdots(z-\lambda_m)$.
  Then $d \in \Lim(X)$; and if $d$ is the diagonal of an operator $N \in \mathcal{N}(X)$, then $s(d) = 0$.
\end{theorem}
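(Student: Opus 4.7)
The plan is to handle the two assertions separately. The first is a geometric comparison of $|f|$ and $\dist(\cdot,X)$ on $P$; the second is the substantive part, and my approach proceeds via the essential codimensions of the spectral projections of $N$ against those of an auxiliary diagonal operator.

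\textbf{Part 1 (membership in $\Lim(X)$).} The claim will follow from a uniform bound $\dist(z,X) \leq C|f(z)|$ on $P$. If $|z - \lambda_i|$ is less than half the minimum pairwise distance among the $\lambda_j$, then for each $j \neq i$ the reverse triangle inequality gives $|z - \lambda_j| \geq \tfrac{1}{2}|\lambda_j - \lambda_i|$, so $|f(z)| \geq c_i\, \dist(z,X)$ for a constant $c_i > 0$ depending only on $X$. On the compact complement of these neighborhoods within $P$, $|f|$ is bounded below by a positive constant while $\dist(\cdot,X)$ is bounded above, so the same inequality holds trivially there. Assembling the bounds and summing via \eqref{eq:summability-condition} shows $d \in \Lim(X)$.

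\textbf{Part 2 (vanishing of $s(d)$).} Choose $x_n \in X$ with $|d_n - x_n| = \dist(d_n, X)$ and set $A_i := \{n : x_n = \lambda_i\}$. Write $N = \sum_i \lambda_i P_i$ for the spectral decomposition, with $P_i$ mutually orthogonal infinite-rank projections summing to $I$, and put $p_i(n) := \langle P_i e_n, e_n\rangle$, so $d_n = \sum_i \lambda_i p_i(n)$ and $\sum_i p_i(n) = 1$. The convex polygon hypothesis supplies, at each vertex $\lambda_i$, an $\mathbb{R}$-linear functional $\ell_i$ on $\mathbb{C}$ with $\ell_i(\lambda_j - \lambda_i) \geq c_i > 0$ for all $j \neq i$; applying $\ell_i$ to the convex combination $d_n - \lambda_i = \sum_{j \neq i} p_j(n)(\lambda_j - \lambda_i)$ yields, for $n \in A_i$, the bound $1 - p_i(n) \leq c_i^{-1}\|\ell_i\|\,|d_n - x_n|$, and symmetrically $p_i(n) \leq c^{-1}|d_n - x_n|$ for $n \notin A_i$. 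Combined with $\|P_i e_n - e_n\|^2 = 1 - p_i(n)$ on $A_i$ and $\|P_i e_n\|^2 = p_i(n)$ off $A_i$, this implies that $P_i - Q_i$ is Hilbert--Schmidt with diagonal in $\ell^1$, where $Q_i$ is the projection onto $\overline{\spans}\{e_n : n \in A_i\}$; it also forces each $A_i$ to be infinite, since otherwise $\sum_n p_i(n) < \infty$ would contradict $\rank(P_i) = \infty$. Hence $D := \sum_i \lambda_i Q_i$ lies in $\mathcal{N}(X)$.

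The closing step is to identify the diagonal sum $\sum_n \langle (P_i - Q_i)e_n, e_n\rangle$ with the essential codimension $[P_i : Q_i] \in \mathbb{Z}$ of Brown--Douglas--Fillmore \cite{BDF-1973-PoaCoOT} (cf.~\cite{KL-2017-IEOT})---precisely where the ``trace invariance property of arbitrary normal operators'' from the abstract should enter. With it in hand,
\[
  \sum_n(d_n - x_n) = \sum_n \langle (N - D)e_n, e_n\rangle = \sum_i \lambda_i\,[P_i : Q_i],
\]
and since $\sum_i(P_i - Q_i) = 0$, additivity of essential codimension on pairwise orthogonal families gives $\sum_i [P_i : Q_i] = 0$. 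The right-hand side is therefore an element of $K_X$ by its very definition, which is the assertion $s(d) = 0 \in \mathbb{C}/K_X$.

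\textbf{Main obstacle.} The analytic heart is justifying the identification $\sum_n \langle (P_i - Q_i)e_n, e_n\rangle = [P_i : Q_i]$, since $P_i - Q_i$ is only Hilbert--Schmidt rather than trace class; the trace-class case reduces to the familiar $[P : Q] = \trace(P - Q)$, but absent trace class membership one needs a more subtle identity. I expect the reformulation of diagonalization in terms of a unitary $U = I + K$ with $K$ Hilbert--Schmidt, promised in the abstract, to supply a commutator-style trace identity that makes this calculation rigorous.
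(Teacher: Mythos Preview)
Your proposal is correct, and the ``main obstacle'' you flag is not in fact an obstacle: the identity $\sum_n \langle (P_i - Q_i)e_n, e_n\rangle = [P_i : Q_i]$ is precisely \autoref{prop:kadison-integer-essential-codimension}. In that proposition's notation your bounds give $a = \sum_{n\notin A_i} p_i(n)$ and $b = \sum_{n\in A_i}(1-p_i(n))$ both finite, whence $P_i - Q_i$ is Hilbert--Schmidt and the diagonal sum equals $a-b = [P_i:Q_i]$. No commutator or trace-invariance argument is needed at this point; the interchange of sums in $\sum_n(d_n-x_n) = \sum_i \lambda_i \sum_n\langle(P_i-Q_i)e_n,e_n\rangle$ is already justified by the absolute summability you established.

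The paper's own route shares your first half (the separating-functional estimate, \autoref{lem:convexity-coefficient-corner}, yielding $P_i - Q_i \in \mathcal{C}_2$) but then takes a deliberate detour for the second half. Rather than invoking \autoref{prop:kadison-integer-essential-codimension} directly on each pair $(P_i,Q_i)$, it packages the Hilbert--Schmidt estimates into the statement that $N$ is diagonalized by some unitary $U = I+K$ with $K\in\mathcal{C}_2$ (\autoref{thm:diagonalizable-by-I-plus-HS}), proves the standalone trace-invariance identity $\trace E(UNU^*-N) = 0$ for arbitrary normal $N$ (\autoref{thm:expectation-trace-zero}), and then compares $UNU^*$ with $N'$---a finite-rank difference---to read off the essential codimensions (\autoref{thm:arveson-reformulated}). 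Your argument is more economical for Arveson's theorem itself; the paper's longer path is what isolates restricted diagonalization and the trace-invariance property as results of independent interest and what permits the extension beyond the convex-polygon hypothesis.
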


The summability condition \eqref{eq:summability-condition} is equivalent to $(d_n) \in \Lim(X)$ via a routine analysis argument (see \cite[Proposition~2]{Arv-2007-PNASU}).
Moreover, using the notation of \autoref{def:s-quotient}, $s(d) = 0$ is equivalent by definition to the existence integers $c_1,\ldots,c_m$ (which depend on the choices $x_n \in X$) whose sum is zero for which
\begin{equation}
  \label{eq:c_k-integers}
  \sum_{n=1}^{\infty} (d_n - x_n) = \sum_{k=1}^m c_k \lambda_k \in K_X.
\end{equation}

When $X = \{0,1\} = \spec(N)$, $N$ is a projection, and the condition $(d_n) \in \Lim(\{0,1\})$ is equivalent to $a+b < \infty$, where $a,b$ are defined as in \autoref{thm:kadison-carpenter-pythagorean}.
Moreover, $K_{\{0,1\}} = \mathbb{Z}$, so that Arveson's theorem is a generalization of the forward implication \autoref{thm:kadison-carpenter-pythagorean}\ref{item:a+b-finite} in the situation where $P$ is an infinite and co-infinite projection.

Our focus is to provide a new approach to Arveson's theorem that, by linking it to the notion of diagonalization by unitaries which are Hilbert--Schmidt perturbations of the identity, permits us both to identify the integers $c_k$ of \eqref{eq:c_k-integers} implicit in the theorem in terms of essential codimension and to eliminate some of the hypotheses in the theorem.
Our intent is to bring a fresh perspective on two key parts of Arveson's theorem: the quantity $\sum_{n=1}^{\infty} (d_n-x_n)$ and the condition $(d_n) \in \Lim \big(\spec(N)\big)$.
We identify the sum $\sum_{n=1}^{\infty} (d_n-x_n)$ as $\trace\big(E(N-N')\big)$ for some diagonal operator $N'$ with $\spec(N') \subseteq \spec(N)$ (\autoref{prop:renormalized-sum-trace}).
Here $E : B(\mathcal{H}) \to \mathcal{A}$ denotes the canonical trace-preserving conditional expectation onto the atomic masa associated to an orthonormal basis;
that is, $E$ is the operation of ``taking the main diagonal.''
Then we prove that if $N$ is normal and $U$ is a unitary which is a Hilbert--Schmidt perturbation of the identity, then $E(N-UNU^{*})$ is trace-class and $\trace\big(E(N-UNU^{*})\big) = 0$ (\autoref{thm:expectation-trace-zero}).
Next, we establish that the condition $(d_n) \in \Lim \big(\spec(N)\big)$ is equivalent to the diagonalizability of $N$ by a unitary which is a Hilbert--Schmidt perturbation of the identity (\autoref{thm:diagonalizable-by-I-plus-HS}).
The proof relies on essential codimension and a geometric lemma (\autoref{lem:convexity-coefficient-corner}) which is similar to \cite[Lemma~1]{Arv-2007-PNASU}.
This culminates in a generalization of Arveson's theorem (\autoref{thm:arveson-reformulated}) proved using techniques involving essential codimension, which allows for the identification of the integers $c_k$ in terms of the essential codimensions of pairs of spectral projections of $N$ and a diagonal operator $N'$.
Finally, we show how our results may be used to derive Arveson's \autoref{thm:arveson-pythagorean}.

\section{Essential codimension}
\label{sec:essential-codimension}

A fundamental tool we use throughout is the notion of essential codimension due to Brown, Douglas and Fillmore \cite[Remark~4.9]{BDF-1973-PoaCoOT}.
It associates an integer to a pair of projections $P,Q$ whose difference is compact by means of the Fredholm operator $QP: P\Hil \to Q\Hil$.

\begin{definition}
  \label{def:essential-codimension}
  Given a pair of projections $P,Q$ whose difference is compact, the \emph{essential codimension} of $P$ in $Q$, denoted $[P:Q]$, is the integer defined by
  \begin{equation*}
    [P:Q] :=
    \begin{cases}
      \trace P-\trace Q & \text{if}\ \trace P,\trace Q < \infty, \\[0.5em]
      \ind(V^{*}W) & \parbox[c][2em]{0.5\textwidth}{if $\trace(P) = \trace(Q) = \infty$, where \\
       $W^{*}W = V^{*}V = I, WW^{*} = P, VV^{*} = Q$.} \\[0.4em]
    \end{cases}
  \end{equation*}
  Equivalently, essential codimension maybe be defined as
  \begin{equation*}
    [P:Q] := \ind(QP), \quad\text{where}\ QP : P\Hil \to Q\Hil.
  \end{equation*}
\end{definition}

Several simple properties of essential codimension which we use are collated here for reference.
Proofs can be found in, for example, \cite[Proposition~2.2]{BL-2012-CJM}.
Each property can be derived from standard facts about Fredholm index.

\begin{proposition}
  \label{prop:essential-codimension}
  Let $P_1,P_2$ and $Q_1,Q_2$ each be mutually orthogonal pairs of projections with the property that $P_j-Q_j$ is compact for $j=1,2$.
  Suppose also that $R_1$ is a projection for which $Q_1-R_1$ is compact.
  Then
  \begin{enumerate}
  \item\label{item:negative-reverse} $[P_1:Q_1] = -[Q_1:P_1]$
  \item\label{item:orthogonal-sum} $[P_1:Q_1] + [P_2:Q_2] = [P_1+P_2:Q_1+Q_2]$
  \item\label{item:concatenation} $[P_1:R_1] = [P_1:Q_1] + [Q_1:R_1]$
  \end{enumerate}
\end{proposition}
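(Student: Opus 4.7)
The plan is to verify each of the three properties by reducing it to a standard fact about the Fredholm index, using the operator-theoretic description $[P:Q] = \ind(QP)$, where $QP$ is regarded as a map $P\Hil \to Q\Hil$. The Fredholm-theoretic ingredients I will invoke are: invariance of the index under compact perturbations, the adjoint identity $\ind(T^*) = -\ind(T)$, multiplicativity $\ind(ST) = \ind(S) + \ind(T)$ along composable Fredholm operators, and additivity across orthogonal direct sums. Part \textup{(i)} follows immediately from the observation that the Hilbert space adjoint of $Q_1 P_1 : P_1\Hil \to Q_1\Hil$ is exactly $P_1 Q_1 : Q_1\Hil \to P_1\Hil$, combined with the adjoint identity.

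For part \textup{(ii)}, the plan is to expand
\begin{equation*}
(Q_1+Q_2)(P_1+P_2) = Q_1 P_1 + Q_2 P_2 + Q_1 P_2 + Q_2 P_1
\end{equation*}
and note that the cross terms are compact: since $P_1 P_2 = 0$, one has $Q_1 P_2 = (Q_1 - P_1) P_2$, which is compact by hypothesis, and $Q_2 P_1 = (Q_2 - P_2) P_1$ is compact for the same reason. Identifying $(P_1+P_2)\Hil$ with $P_1\Hil \oplus P_2\Hil$ and $(Q_1+Q_2)\Hil$ with $Q_1\Hil \oplus Q_2\Hil$, the remaining operator $Q_1 P_1 + Q_2 P_2$ is the block diagonal $Q_1 P_1 \oplus Q_2 P_2$, so compact invariance and direct-sum additivity of the index give the conclusion.

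For part \textup{(iii)}, I will use the factorization $R_1 Q_1 P_1 = (R_1 Q_1)(Q_1 P_1)$ along $P_1\Hil \xrightarrow{Q_1 P_1} Q_1\Hil \xrightarrow{R_1 Q_1} R_1\Hil$, together with the compact perturbation $R_1 Q_1 P_1 - R_1 P_1 = R_1(Q_1 - P_1) P_1$. Multiplicativity and compact invariance of the index then yield $\ind(R_1 P_1) = \ind(R_1 Q_1) + \ind(Q_1 P_1)$, which is exactly the asserted equality. No single step presents real difficulty; the main thing to be vigilant about is bookkeeping of which Hilbert subspace each operator acts on and into, so that every index manipulation is legal. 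In particular, the apparent obstacle in \textup{(ii)} — that the sum $Q_1 P_1 + Q_2 P_2$ is not literally a direct sum operator — dissolves once one passes to the natural unitary identifications of $(P_1+P_2)\Hil$ and $(Q_1+Q_2)\Hil$ with the orthogonal direct sums of their components.
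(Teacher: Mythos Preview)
Your proof is correct and aligns precisely with the paper's treatment: the paper does not give a detailed proof of this proposition but merely refers to \cite[Proposition~2.2]{BL-2012-CJM} and remarks that ``each property can be derived from standard facts about Fredholm index.'' Your argument is exactly such a derivation, using the adjoint identity for \textup{(i)}, compact invariance plus direct-sum additivity for \textup{(ii)}, and compact invariance plus multiplicativity for \textup{(iii)}, with the bookkeeping of domains and codomains handled correctly throughout.
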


The original result of Brown, Douglas and Fillmore \cite[Remark~4.9]{BDF-1973-PoaCoOT} characterizes when projections can be conjugated by a unitary which is a compact perturbation of the identity.
More specifically, they proved that there is a unitary $U = I+K$ with $K$ compact which conjugates $P,Q$ if and only if $P-Q$ is compact and their essential codimension is zero.
The next proposition comes from \cite[Proposition~2.7(ii)]{KL-2017-IEOT} and extends the Brown--Douglas--Fillmore result verbatim to an arbitrary proper operator ideal $\mathcal{J}$, where $\mathcal{J}$ is two-sided but not necessarily norm-closed.
Herein, $\mathcal{J}$ will always denote a proper operator ideal.

\begin{proposition}
  \label{prop:restricted-conjugation-of-projections}
  If $P,Q$ are projections and $\mathcal{J}$ is a proper operator ideal, then $Q = UPU^{*}$ for some unitary $U = I+K$ with $K \in \mathcal{J}$ if and only if $P-Q \in \mathcal{J}$ and $[P:Q] = 0$.
\end{proposition}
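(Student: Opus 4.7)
My plan is to prove the two implications separately. The forward direction is essentially bookkeeping with the identity $Q = UPU^{*}$; the reverse direction requires a Fredholm-theoretic matching operator followed by a polar decomposition argument.

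For the forward implication, suppose $U = I + K$ with $K \in \mathcal{J}$ and $Q = UPU^{*}$. Expanding yields $Q - P = KP + PK^{*} + KPK^{*} \in \mathcal{J}$ at once. For the index assertion, note that $U|_{P\Hil} : P\Hil \to Q\Hil$ is a unitary bijection (it is isometric because $U$ is, with range $UP\Hil = Q\Hil$), hence has Fredholm index $0$. A short computation using $U^{*} - I = K^{*}$ and $P^{2} = P$ gives $QP - UP = U(PU^{*}P - P) = UPK^{*}P \in \mathcal{J}$, so $QP|_{P\Hil}$ and $U|_{P\Hil}$ differ by a compact operator $P\Hil \to Q\Hil$. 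Stability of the Fredholm index then yields $[P:Q] = \ind(QP|_{P\Hil}) = \ind(U|_{P\Hil}) = 0$.

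For the reverse implication, assume $P - Q \in \mathcal{J}$ and $[P:Q] = 0$. I would first form the matching operator $A := QP + (I-Q)(I-P)$, which by construction sends $P\Hil$ into $Q\Hil$ and $(I-P)\Hil$ into $(I-Q)\Hil$. Using the identities $(I-Q)P = (I-Q)(P - Q) \in \mathcal{J}$ and $Q(I-P) = (Q - P)(I - P) \in \mathcal{J}$, a direct computation shows $A - I = 2QP - P - Q \in \mathcal{J}$. The two diagonal blocks of $A$ are Fredholm of indices $[P:Q] = 0$ and $[I-P:I-Q] = -[P:Q] = 0$; the latter follows by applying \autoref{prop:essential-codimension}\ref{item:orthogonal-sum} to $P + (I-P) = I$ and $Q + (I-Q) = I$ and noting that $[I:I] = 0$. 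Consequently the finite-dimensional subspaces $F_{1} := P\Hil \cap (I-Q)\Hil$ (the kernel of the first block) and $F_{2} := Q\Hil \cap (I-P)\Hil$ (its cokernel, and the kernel of the second block) have equal dimension. Fix unitaries $S_{1} : F_{1} \to F_{2}$ and $S_{2} : F_{2} \to F_{1}$, let $S := S_{1} \oplus S_{2}$ extended by zero on $(F_{1} \oplus F_{2})^{\perp}$, and set $B := A + S$. Since $S$ is finite rank, $B - I \in \mathcal{J}$; and $B$ remains block diagonal with now-invertible diagonal blocks, so $B$ itself is invertible on $\Hil$.

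Finally, I would polar decompose $B = U|B|$, with $U$ unitary by invertibility of $B$. From $B^{*}B - I = (B^{*} - I)B + (B - I) \in \mathcal{J}$ and the resolvent identity $(|B| - I)(|B| + I) = B^{*}B - I$, I would deduce $|B| - I = (B^{*}B - I)(|B| + I)^{-1} \in \mathcal{J}$, since $|B| + I \ge I$ has bounded inverse and $\mathcal{J}$ is a two-sided ideal. Then $U - I = ((B - I) - (|B| - I))|B|^{-1} \in \mathcal{J}$. Because $B$ is block diagonal with respect to the source decomposition $\Hil = P\Hil \oplus (I-P)\Hil$, the operator $B^{*}B$ commutes with $P$, hence so do $|B|$ and $|B|^{-1}$. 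Therefore $U = B|B|^{-1}$ still carries $P\Hil$ onto $Q\Hil$, which is exactly $UPU^{*} = Q$. The main obstacle I anticipate is this last passage from $|B|^{2} - I \in \mathcal{J}$ to $|B| - I \in \mathcal{J}$: because $\mathcal{J}$ is merely algebraic and need not be closed under continuous functional calculus, one cannot invoke $\sqrt{\cdot}$ directly, and the resolvent identity above is what makes the argument uniform across all proper two-sided operator ideals.
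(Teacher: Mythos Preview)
The paper does not actually prove this proposition; it quotes it from \cite[Proposition~2.7(ii)]{KL-2017-IEOT} as a known extension of the Brown--Douglas--Fillmore result, so there is no in-paper argument to compare against. Your proof is correct and essentially follows the classical route: the matching operator $A = QP + Q^{\perp}P^{\perp}$ is a $\mathcal{J}$-perturbation of the identity that is block diagonal between the decompositions $P\Hil \oplus P^{\perp}\Hil$ and $Q\Hil \oplus Q^{\perp}\Hil$, the vanishing of both essential codimensions lets you repair its finite-dimensional kernel and cokernel by a finite-rank $S$, and polar decomposition extracts the unitary. Your handling of the one genuinely delicate step---passing from $|B|^{2} - I \in \mathcal{J}$ to $|B| - I \in \mathcal{J}$ without any functional-calculus closure hypothesis, via $(|B|-I) = (B^{*}B - I)(|B|+I)^{-1}$---is exactly the point that makes the argument work uniformly for every proper two-sided ideal, and is the reason the Brown--Douglas--Fillmore compact-ideal argument extends verbatim.
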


The following proposition is a reformulation of \cite[Proposition~2.8]{KL-2017-IEOT} for the case when the ideal is the Hilbert--Schmidt class $\mathcal{C}_2$.
This proposition relates the Kadison integer to essential codimension in the following manner.
If $P$ is a projection with diagonal $(d_n)$ and $a,b$ are as in \autoref{thm:kadison-carpenter-pythagorean} with $a+b < \infty$, then, by choosing $Q$ to be the projection onto $\spans \{ e_n \mid d_n \ge \nicefrac{1}{2} \}$, \autoref{prop:kadison-integer-essential-codimension} guarantees $P-Q$ is Hilbert--Schmidt (a fact which was known to Arveson) and that $a-b = [P:Q]$.

\begin{proposition}
  \label{prop:kadison-integer-essential-codimension}
  Suppose $P,Q$ are projections. Then $P-Q$ is Hilbert--Schmidt if and only if in some (equivalently, every) orthonormal basis $\{e_n\}_{n=1}^{\infty}$ which diagonalizes $Q$, the diagonal $(d_n)$ of $P$ satisfies $a+b < \infty$, where
  \begin{equation*}
    a := \sum_{e_n \in Q^{\perp}\Hil} d_n = \trace(Q^{\perp}PQ^{\perp}) \quad\text{and}\quad b := \sum_{e_n \in Q\Hil} (1-d_n) = \trace(Q-QPQ).
  \end{equation*}
  Whenever $P-Q$ is Hilbert--Schmidt, $a-b = [P:Q]$.
\end{proposition}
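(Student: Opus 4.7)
The plan is to handle the two claims separately. For the characterization of $P-Q \in \mathcal{C}_2$ together with the trace identities for $a$ and $b$, I would compute $\|P-Q\|_2^2$ directly in the basis $\{e_n\}$: since this basis diagonalizes $Q$, every $e_n$ satisfies either $Qe_n = e_n$ (when $e_n \in Q\Hil$) or $Qe_n = 0$ (when $e_n \in Q^{\perp}\Hil$), and correspondingly $\|(P-Q)e_n\|^2$ evaluates to $1-d_n$ or $d_n$, using $\|Pe_n\|^2 = \langle Pe_n,e_n\rangle = d_n$. Summing over $n$ yields $\|P-Q\|_2^2 = a + b$. This simultaneously establishes the equivalence $P-Q \in \mathcal{C}_2 \iff a+b<\infty$, the identities $a = \trace(Q^{\perp}PQ^{\perp})$ and $b = \trace(Q-QPQ)$ read off the diagonal, and the basis-independence of $a+b$ among bases diagonalizing $Q$ (since $\|P-Q\|_2$ is intrinsic).

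For the identification $a-b = [P:Q]$ when $P-Q \in \mathcal{C}_2$, I would exploit the singular value correspondence for $A := QP \colon P\Hil \to Q\Hil$, for which \autoref{def:essential-codimension} gives $[P:Q] = \ind A$. From the first step, $PQ^{\perp}P|_{P\Hil} = I_{P\Hil} - A^{*}A$ and $QP^{\perp}Q|_{Q\Hil} = I_{Q\Hil} - AA^{*}$ are positive trace class operators with traces $a$ and $b$ respectively. Their $1$-eigenspaces are $\ker A^{*}A = \ker A = P\Hil \cap Q^{\perp}\Hil$ and $\ker AA^{*} = \ker A^{*} = Q\Hil \cap P^{\perp}\Hil$, of dimensions $n_+$ and $n_-$, so $\ind A = n_+ - n_-$. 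Meanwhile the nonzero spectra of $A^{*}A$ and $AA^{*}$ agree with multiplicity by the standard polar decomposition, so the $(0,1)$-spectra of $I - A^{*}A$ and $I - AA^{*}$ also agree. Decomposing each trace into its $1$-eigenspace contribution plus its $(0,1)$-eigenspace contribution yields $a - n_+ = b - n_-$, hence $a - b = n_+ - n_- = [P:Q]$.

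The main obstacle is keeping the spectral bookkeeping precise: one needs multiplicities (not just sets) of non-extremal eigenvalues to coincide, and trace class membership of $PQ^{\perp}P|_{P\Hil}$ and $QP^{\perp}Q|_{Q\Hil}$ must be justified. The latter follows from $PQ^{\perp} = (P-Q)Q^{\perp}$ and $QP^{\perp} = -(P-Q)P^{\perp}$ being Hilbert--Schmidt, whence $PQ^{\perp}P = (PQ^{\perp})(PQ^{\perp})^{*}$ and $QP^{\perp}Q = (QP^{\perp})(QP^{\perp})^{*}$ are trace class. Since the statement is, as noted, a reformulation of \cite[Proposition~2.8]{KL-2017-IEOT} for the ideal $\mathcal{C}_2$, an alternative is to dispatch it by citation; the computation above would then render the argument self-contained.
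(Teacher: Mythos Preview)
The paper does not prove this proposition; it records it as the specialization of \cite[Proposition~2.8]{KL-2017-IEOT} to the Hilbert--Schmidt ideal and moves on. Your proposal is therefore not competing with an in-text argument but supplying one, as you yourself note at the end.

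Your argument is correct. The identity $\|(P-Q)e_n\|^2 = 1-d_n$ or $d_n$ according as $e_n \in Q\Hil$ or $e_n \in Q^{\perp}\Hil$ gives $\|P-Q\|_2^2 = a+b$, which handles both the equivalence and the basis independence; the trace formulas for $a$ and $b$ are immediate from the same diagonal computation. For $a-b=[P:Q]$, the comparison of $\trace(I_{P\Hil}-A^{*}A)$ and $\trace(I_{Q\Hil}-AA^{*})$ via the coincidence (with multiplicity) of the nonzero spectra of $A^{*}A$ and $AA^{*}$ is sound, and your justification that $PQ^{\perp}P=(PQ^{\perp})(PQ^{\perp})^{*}$ and $QP^{\perp}Q$ are trace class is exactly what is needed to make the eigenvalue sums finite. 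One point you leave implicit but which follows from the same observation: since $PQ^{\perp}=(P-Q)Q^{\perp}\in\mathcal{C}_2$, the operator $I_{P\Hil}-A^{*}A$ is compact, so $A$ is Fredholm and $n_{\pm}$ are finite.
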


\section{Restricted diagonalization}
\label{sec:restricted-diagonalization}

It is elementary that finite spectrum normal operators are diagonalizable.
However, one may ask about the possibility of diagonalization relative to a fixed orthonormal basis (or atomic masa) by a unitary of the form $U = I+K$ where $K$ lies in a given proper operator ideal $\mathcal{J}$.
For this we use the term \emph{restricted diagonalization}.
This concept has been studied by others in the aforementioned paper of Brown--Douglas--Fillmore \cite{BDF-1973-PoaCoOT}, as well as by Belti\c{t}a--Patnaik--Weiss \parencite{BPW-2016-IUMJ}, and Hinkkanen \cite{Hin-1985-MMJ}.
To our knowledge, the term restricted diagonalization was introduced by Belti\c{t}a--Patnaik--Weiss.

\subsection{Conditions for restricted diagonalization}

The next result is a corollary of \Cref{prop:restricted-conjugation-of-projections,prop:kadison-integer-essential-codimension}. It describes the conditions under which a projection experiences restricted diagonalization.
In the special case of the Hilbert--Schmidt ideal, this corollary shows that it suffices to examine the diagonal of the projection.

\begin{corollary}
  \label{cor:restricted-diagonalization-of-projections}
  For a projection $P$ and a proper operator ideal $\mathcal{J}$, the following are equivalent:
  \begin{enumerate}
  \item\label{item:diagonalization} $P$ is diagonalizable by a unitary $U=I+K$ with $K \in \mathcal{J}$;
  \item\label{item:diagonal-projection} there exists a diagonal projection $Q$ for which $P-Q \in \mathcal{J}$.
  \end{enumerate}
  If $\mathcal{J} = \mathcal{C}_2$, then these are also equivalent to:
  \begin{enumerate}[resume]
  \item\label{item:diagonal-sequence} the diagonal $(d_n)$ of $P$ lies in $\Lim(\{0,1\})$.
  \end{enumerate}
\end{corollary}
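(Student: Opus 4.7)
My plan is to deduce the equivalence \ref{item:diagonalization} $\Leftrightarrow$ \ref{item:diagonal-projection} from \autoref{prop:restricted-conjugation-of-projections} after arranging $[P:Q] = 0$, and to obtain \ref{item:diagonal-projection} $\Leftrightarrow$ \ref{item:diagonal-sequence} under $\mathcal{J} = \mathcal{C}_2$ directly from \autoref{prop:kadison-integer-essential-codimension}.

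For \ref{item:diagonalization} $\Rightarrow$ \ref{item:diagonal-projection}, I would take $Q := UPU^{*}$; since $U = I + K$ with $K \in \mathcal{J}$, the expansion $UPU^{*} - P = KP + PK^{*} + KPK^{*}$ lies in $\mathcal{J}$, and $Q$ is diagonal by hypothesis. For \ref{item:diagonal-projection} $\Rightarrow$ \ref{item:diagonalization}, set $n := [P:Q]$. When $n = 0$, \autoref{prop:restricted-conjugation-of-projections} supplies the desired unitary immediately. When $n \ne 0$, I modify $Q$ by a rank-$|n|$ diagonal projection $F$ so as to kill the essential codimension: set $\tilde{Q} := Q + F$ with $F \le Q^{\perp}$ if $n > 0$, and $\tilde{Q} := Q - F$ with $F \le Q$ if $n < 0$. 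Then $\tilde{Q}$ is a diagonal projection, $P - \tilde{Q} \in \mathcal{J}$ because $F$ is finite rank, and \autoref{prop:essential-codimension} gives $[P:\tilde{Q}] = [P:Q] + [Q:\tilde{Q}] = n - n = 0$. Applying \autoref{prop:restricted-conjugation-of-projections} with $\tilde{Q}$ in place of $Q$ then supplies a unitary $U = I + K$ with $K \in \mathcal{J}$ and $UPU^{*} = \tilde{Q}$, which is diagonal.

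For \ref{item:diagonal-projection} $\Leftrightarrow$ \ref{item:diagonal-sequence} under $\mathcal{J} = \mathcal{C}_2$, observe that $d_n \in [0,1]$ since $P$ is a projection, so $\dist(d_n, \{0,1\}) = \min(d_n, 1 - d_n)$. For \ref{item:diagonal-sequence} $\Rightarrow$ \ref{item:diagonal-projection}, take $Q$ to be the diagonal projection onto $\spans\{e_n \mid d_n \ge \nicefrac{1}{2}\}$: the quantities $a, b$ in \autoref{prop:kadison-integer-essential-codimension} are then precisely $\sum_{d_n < 1/2} d_n$ and $\sum_{d_n \ge 1/2}(1 - d_n)$, whose total is $\sum_n \dist(d_n, \{0,1\}) < \infty$, whence $P - Q \in \mathcal{C}_2$. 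Conversely, for any diagonal $Q$ with $P - Q \in \mathcal{C}_2$, each summand defining $a, b$ in \autoref{prop:kadison-integer-essential-codimension} dominates the corresponding term $\dist(d_n, \{0,1\})$, and finiteness of $a + b$ yields $(d_n) \in \Lim(\{0,1\})$.

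The main obstacle I expect is the subspace bookkeeping in the correction step of \ref{item:diagonal-projection} $\Rightarrow$ \ref{item:diagonalization}: the rank-$|n|$ projection $F$ must be diagonal and supported in $Q^{\perp}\Hil$ (for $n > 0$) or $Q\Hil$ (for $n < 0$), so I must confirm that the relevant subspace always contains at least $|n|$ standard basis vectors. Compactness of $P - Q$ forces $P$ and $Q$ to be simultaneously finite-rank, simultaneously finite-corank, or simultaneously infinite with infinite corank; a short trace count in each of these three cases confirms the needed dimensionality.
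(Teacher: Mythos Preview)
Your proposal is correct and follows essentially the same route as the paper: the paper also proves \ref{item:diagonalization} $\Rightarrow$ \ref{item:diagonal-projection} by setting $Q = UPU^{*}$, proves \ref{item:diagonal-projection} $\Rightarrow$ \ref{item:diagonalization} by adjusting $Q$ by a finite-rank diagonal projection to force $[P:Q']=0$ and then invoking \autoref{prop:restricted-conjugation-of-projections}, and handles \ref{item:diagonal-projection} $\Leftrightarrow$ \ref{item:diagonal-sequence} via \autoref{prop:kadison-integer-essential-codimension}. The only cosmetic differences are that the paper asserts the needed rank inequalities $\trace Q \ge -[P:Q]$ and $\trace Q^{\perp} \ge [P:Q]$ without your trichotomy argument, and in \ref{item:diagonal-sequence} $\Rightarrow$ \ref{item:diagonal-projection} it chooses an arbitrary $(x_n)$ witnessing $(d_n)\in\Lim(\{0,1\})$ rather than your specific threshold-$\nicefrac{1}{2}$ projection.
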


\begin{proof}
  \ref{item:diagonalization} $\Rightarrow$ \ref{item:diagonal-projection}.
  Suppose that $P$ is diagonalizable by a unitary $U=I+K$ with $K \in \mathcal{J}$.
  Then setting $Q := UPU^{*}$, we have $P-Q = -KP-PK^{*}-KPK^{*} \in \mathcal{J}$.

  \ref{item:diagonal-projection} $\Rightarrow$ \ref{item:diagonalization}.
  Suppose $Q$ is a diagonal projection for which $P-Q \in \mathcal{J}$.
  By replacing $Q$ with a diagonal projection $Q'$ that is a finite perturbation of $Q$, we can assume that $[P:Q] = 0$.
  Indeed, notice that if $[P:Q] < 0$, then $\trace Q \ge -[P:Q]$, so there is a diagonal subprojection $Q'$ of $Q$ with $\trace(Q-Q') = -[P:Q]$.
  Similarly, if $[P:Q] > 0$, then $\trace Q^{\perp} \ge [P:Q]$, so there is a diagonal subprojection $R$ of $Q^{\perp}$ with $\trace R = [P:Q]$, and in this case we set $Q' = Q+R$.
  In either case, the construction guarantees $[P:Q] = -[Q:Q']$, and hence by \autoref{prop:essential-codimension}\ref{item:concatenation}, $[P:Q'] = [P:Q] + [Q:Q'] = 0$.
  Therefore by \autoref{prop:restricted-conjugation-of-projections}, $P$ and $Q'$ are conjugated by a unitary $U=I+K$ with $K \in \mathcal{J}$, and hence $P$ is diagonalized by $U$.

  \ref{item:diagonal-projection} $\Rightarrow$ \ref{item:diagonal-sequence}.
  If $P-Q \in \mathcal{C}_2$, then by \autoref{prop:kadison-integer-essential-codimension}, for $a,b$ defined as in that proposition, $a+b < \infty$.
  Equivalently, $(d_n) \in \Lim(\{0,1\})$.

  \ref{item:diagonal-sequence} $\Rightarrow$ \ref{item:diagonal-projection}.
  If the diagonal $(d_n)$ of $P$ lies in $\Lim(\{0,1\})$, then there are some choices $x_n \in \{0,1\}$ for which $(d_n - x_n) \in \ell^1$.
  Let $Q$ be the diagonal projection onto the $\spans\{ e_n \mid x_n =1 \}$.
  Then for $a,b$ as defined in \autoref{prop:kadison-integer-essential-codimension}, $a+b < \infty$, and so $P-Q \in \mathcal{C}_2$ by that result.
\end{proof}

We will generalize \autoref{cor:restricted-diagonalization-of-projections} to finite spectrum normal operators.
The equivalence \ref{item:diagonalization}~$\Leftrightarrow$~\ref{item:diagonal-projection} is generalized by \autoref{thm:finite-spectrum-normal-restricted-diagonalizability}, and \ref{item:diagonalization}~$\Leftrightarrow$~\ref{item:diagonal-sequence} by \autoref{thm:diagonalizable-by-I-plus-HS}.

\autoref{prop:restricted-conjugation-of-projections} can be bootstrapped by induction to characterize when a pair of finite collections of mutually orthogonal projections can be \emph{simultaneously} conjugated by a unitary $U = I + K$ with $K \in \mathcal{J}$.

\begin{lemma}
  \label{lem:restricted-conjugation-of-sets-of-projections}
  Suppose $\{P_k\}_{k=1}^m, \{Q_k\}_{k=1}^m$ are each finite sets of mutually orthogonal projections, and $\mathcal{J}$ is a proper operator ideal.
  Then there is some unitary $U = I+K$ with $K \in \mathcal{J}$ for which $Q_k = UP_kU^{*}$ for $1 \le k \le m$ if and only if $P_k - Q_k \in \mathcal{J}$ and $[P_k:Q_k] = 0$ for all $1 \le k \le m$.
\end{lemma}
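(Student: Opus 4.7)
The plan is to handle the easy direction by direct computation and the hard direction by induction on $m$.

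For the forward direction, if $U = I + K$ with $K \in \mathcal{J}$ satisfies $Q_k = U P_k U^*$ for every $k$, then $Q_k - P_k = K P_k + P_k K^* + K P_k K^* \in \mathcal{J}$ since $\mathcal{J}$ is two-sided, and applying \autoref{prop:restricted-conjugation-of-projections} to each individual pair forces $[P_k : Q_k] = 0$.

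For the reverse direction, I would induct on $m$, with the base case $m = 1$ being precisely \autoref{prop:restricted-conjugation-of-projections}. For the inductive step, first apply the base case to the pair $(P_m, Q_m)$ to obtain $U_m = I + K_m$ with $K_m \in \mathcal{J}$ and $U_m P_m U_m^* = Q_m$. Transporting the other projections via $P_k' := U_m P_k U_m^*$ for $k < m$, both $\{P_k'\}_{k=1}^{m-1}$ and $\{Q_k\}_{k=1}^{m-1}$ are mutually orthogonal families each orthogonal to $Q_m$, hence supported on the reducing subspace $\mathcal{H}_0 := Q_m^{\perp} \mathcal{H}$. A direct expansion gives $P_k' - P_k \in \mathcal{J}$ (so $P_k' - Q_k \in \mathcal{J}$), while \autoref{prop:restricted-conjugation-of-projections} combined with \autoref{prop:essential-codimension}\ref{item:concatenation} yields $[P_k' : Q_k] = [P_k' : P_k] + [P_k : Q_k] = 0$. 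Applying the inductive hypothesis on $B(\mathcal{H}_0)$ with the compressed ideal $\mathcal{J}_0 := \{T \in B(\mathcal{H}_0) \vmid T \oplus 0 \in \mathcal{J}\}$ supplies a unitary $V_0 = I_{\mathcal{H}_0} + K_0$ with $K_0 \in \mathcal{J}_0$ conjugating $P_k'$ to $Q_k$ for $k < m$. Extending to $\mathcal{H}$ via $V := Q_m + V_0$ gives a unitary with $V - I = K_0 \oplus 0 \in \mathcal{J}$ that fixes $Q_m$, and then $U := V U_m$ is the desired unitary: $U P_k U^* = V P_k' V^* = Q_k$ for $k < m$ and $U P_m U^* = V Q_m V^* = Q_m$, with $U - I = (V - I) + (U_m - I) + (V - I)(U_m - I) \in \mathcal{J}$.

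The main technical obstacle is the descent to the subspace $\mathcal{H}_0$: one must verify that $\mathcal{J}_0$ is a proper operator ideal on $B(\mathcal{H}_0)$ (trivial if $\mathcal{H}_0$ is finite-dimensional, and otherwise inherited from the fact that every proper ideal of $B(\mathcal{H})$ consists of compact operators) and that extending $V_0$ by the identity on $Q_m\mathcal{H}$ produces a unitary of the required form. Beyond this reduction, the argument is essentially bookkeeping: tracking how membership in $\mathcal{J}$ and vanishing of essential codimension propagate under composition and conjugation, using \Cref{prop:restricted-conjugation-of-projections,prop:essential-codimension}.
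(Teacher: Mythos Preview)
Your proposal is correct and follows essentially the same approach as the paper: the forward direction via \autoref{prop:restricted-conjugation-of-projections}, and the reverse direction by induction on $m$, conjugating one pair first, transporting the remaining projections to the orthogonal complement, and applying the inductive hypothesis there before composing the two unitaries. Your write-up is in fact slightly more careful than the paper's in that you explicitly flag the passage to the compressed ideal $\mathcal{J}_0$ on $Q_m^{\perp}\mathcal{H}$, a point the paper leaves implicit.
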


\begin{proof}
  One direction is straightforward.
  Namely, if there exists a unitary $U = I+K$ with $K \in \mathcal{J}$ for which $Q_k = UP_k U^{*}$ for all $1 \le k \le m$, then by \autoref{prop:restricted-conjugation-of-projections} $P_k - Q_k \in \mathcal{J}$ and $[P_k:Q_k] = 0$.

  For the other direction, we use induction on $m$, and the base case $m=1$ follows from \autoref{prop:restricted-conjugation-of-projections}.
  Let $m \in \mathbb{N}$ and suppose that if $\{P_k\}_{k=1}^m, \{Q_k\}_{k=1}^m$ are each sets of mutually orthogonal projections and satisfy $P_k - Q_k \in \mathcal{J}$ and $[P_k:Q_k] = 0$, then there is a single unitary $U=I+K$ with $K \in \mathcal{J}$ which conjugates $P_k$ into $Q_k$, i.e., $Q_k = UP_kU^{*}$.

  Now suppose we have two sets of $m+1$ mutually orthogonal projections satisfying these conditions.
  By \autoref{prop:restricted-conjugation-of-projections} there is a unitary $V = I+K$ with $K \in \mathcal{J}$ for which $Q_{m+1} = VP_{m+1}V^{*}$.
  Moreover, for $1 \le k \le m$, $P'_k := VP_k V^{*}$ satisfies $P_k - P'_k \in \mathcal{J}$ and $[P_k:P'_k] = 0$.
  Therefore $P'_k - Q_k \in \mathcal{J}$ and $[P'_k:Q_k] = 0$ by \autoref{prop:essential-codimension}\ref{item:concatenation}.
  Applying the inductive hypothesis to the collections $\{P'_k\}_{k=1}^m, \{Q_k\}_{k=1}^m$ on the Hilbert space $Q_{m+1}^{\perp} \Hil$ yields a unitary $W = Q_{m+1}^{\perp} + K'$ acting on $Q_{m+1}^{\perp} \Hil$ with $K' \in \mathcal{J}$, and which conjugates $P'_k$ into $Q_k$ for $1 \le k \le m$.
  Extending this to the unitary $Q_{m+1} \oplus W$ acting on $\Hil$ and setting $U = (Q_{m+1} \oplus W)V$, we find that $U$ is of the desired form and $UP_k U^{*} = Q_k$ for $1 \le k \le m+1$.
\end{proof}

The following lemma weakens the sufficient condition of \autoref{lem:restricted-conjugation-of-sets-of-projections} so long as we are allowed to perturb the diagonal projections.

\begin{lemma}
  \label{lem:sum-zero-each-zero}
  Suppose that $\{P_k\}_{k=1}^m, \{Q_k\}_{k=1}^m$ are each collections of mutually orthogonal projections for which $P_k - Q_k \in \mathcal{J}$ and $\sum_{k=1}^m [P_k:Q_k] = 0$.
  Then for every atomic masa $\mathcal{A}$ containing $\{Q_k\}_{k=1}^m$, there exist mutually orthogonal projections $\{Q'_k\}_{k=1}^m \subseteq \mathcal{A}$ for which $P_k - Q'_k \in \mathcal{J}$ and $[P_k:Q'_k] = 0$.
\end{lemma}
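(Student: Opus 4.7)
The plan is to construct the $Q'_k$ as finite-rank perturbations of the $Q_k$ that shuffle basis vectors among them so that the individual essential codimensions become zero. Writing $n_k := [P_k : Q_k]$, the hypothesis $\sum_k n_k = 0$ means the surplus basis vectors of those $Q_j$ with $n_j < 0$ can exactly cover the deficits of those $Q_k$ with $n_k > 0$.

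Fix the orthonormal basis $\{e_n\}$ whose diagonal masa is $\mathcal{A}$, and write each $Q_k$ as the projection onto $\spans\{e_n : n \in S_k\}$ for pairwise disjoint $S_k \subseteq \mathbb{N}$. Partition the index set $\{1,\ldots,m\}$ into $J^{\pm} := \{k : \pm n_k > 0\}$ and $J^0 := \{k : n_k = 0\}$. For each $k \in J^-$ I will choose a subset $T_k \subseteq S_k$ of cardinality $|n_k|$. The step that needs verifying is that $|S_k|$ is actually this large: if $\trace Q_k = \infty$ this is trivial, and if $\trace Q_k < \infty$ then $P_k$ is also finite rank (a projection differing from a finite-rank projection by a compact operator is finite rank), so $|S_k| = \trace Q_k = \trace P_k + |n_k| \ge |n_k|$.

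Since $\sum_{k \in J^+} n_k = \sum_{k \in J^-} |n_k|$, the finite set $T := \bigcup_{k \in J^-} T_k$ can be partitioned into blocks $\{T'_k\}_{k \in J^+}$ with $|T'_k| = n_k$. Define
\begin{equation*}
  S'_k := \begin{cases} S_k \cup T'_k & \text{if } k \in J^+, \\ S_k \setminus T_k & \text{if } k \in J^-, \\ S_k & \text{if } k \in J^0, \end{cases}
\end{equation*}
and let $Q'_k$ be the diagonal projection onto $\spans\{e_n : n \in S'_k\}$. The $S'_k$ are pairwise disjoint by construction, so the $Q'_k \in \mathcal{A}$ are mutually orthogonal.

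Since each $Q'_k - Q_k$ is finite rank, $P_k - Q'_k = (P_k - Q_k) + (Q_k - Q'_k) \in \mathcal{J}$. Applying \autoref{prop:essential-codimension}\ref{item:concatenation} gives $[P_k : Q'_k] = n_k + [Q_k : Q'_k]$, and a direct computation using \autoref{prop:essential-codimension}\ref{item:negative-reverse}--\ref{item:orthogonal-sum} together with the trace-difference formula applied to the relevant finite-rank projections yields $[Q_k : Q'_k] = -n_k$ in each of the three cases, finishing the proof. The only genuinely nontrivial step is the cardinality verification in the second paragraph; everything else is bookkeeping with the properties of essential codimension.
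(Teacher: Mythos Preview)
Your proof is correct and follows essentially the same approach as the paper: both arguments shuffle finitely many atoms of the masa from those $Q_k$ with $[P_k:Q_k]<0$ to those with $[P_k:Q_k]>0$, using the key observation $\trace Q_k \ge -[P_k:Q_k]$ to guarantee enough atoms are available. The only difference is organizational---the paper moves one block at a time via induction on $m$, whereas you carry out the entire redistribution in a single explicit step; the underlying mechanism and the essential-codimension bookkeeping are identical.
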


\begin{proof}
  Suppose $\{Q_k\}_{k=1}^m$ lies in an atomic masa.
  Note that such a masa always exists since this is a finite collection of mutually orthogonal (hence commuting) projections.
  The argument is by induction on $m$.
  When $m=1$, the claim is trivial.

  Now suppose $m > 1$.
  Either $[P_k : Q_k] = 0$ for all $k$ already, or there are two indices $1 \le i,j \le m$ with $[P_i : Q_i] < 0 < [P_j:Q_j]$.
  Notice that $\trace Q_i \ge -[P_i : Q_i]$.
  Let $Q$ be a diagonal subprojection of $Q_i$ with $\trace Q = \min \{ -[P_i:Q_i], [P_j:Q_j] \}$.
  Then we replace $Q_i$ with $Q_i - Q$ and $Q_j$ with $Q_j + Q$.
  By construction, either $\big[P_i:(Q_i - Q)\big] = 0$ or $\big[P_j:(Q_j+Q)\big] = 0$.
  So now we have $n-1$ pairs of projections for which the sum of the essential codimensions is zero.
  By induction we can actually force them all to be zero while maintaining the condition that the $Q'_k$ projections are diagonal.
\end{proof}

\begin{theorem}
  \label{thm:finite-spectrum-normal-restricted-diagonalizability}
  Suppose $\mathcal{J}$ is a proper operator ideal.
  A finite spectrum normal operator is diagonalizable by a unitary $U=I+K$ with $K \in \mathcal{J}$ if and only if each spectral projection differs from a diagonal projection by an element of $\mathcal{J}$.
\end{theorem}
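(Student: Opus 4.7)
The plan is to prove the two implications separately, with the forward direction being a short calculation and the reverse direction assembling the restricted-conjugation machinery already established. For the forward direction, suppose $U = I+K$ with $K \in \mathcal{J}$ satisfies that $D := UNU^*$ is diagonal. The spectral projections of $D$ are precisely $UP_kU^*$, where $P_1,\ldots,P_m$ denote the spectral projections of $N$, and so are diagonal. Expanding $P_k - UP_kU^* = -KP_k - P_kK^* - KP_kK^*$ shows this difference lies in $\mathcal{J}$, so each $P_k$ differs from a diagonal projection by an element of $\mathcal{J}$.

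For the reverse direction, I would assume each spectral projection $P_k$ of $N = \sum_k \lambda_k P_k$ satisfies $P_k - Q_k \in \mathcal{J}$ for some diagonal projection $Q_k$, and aim to produce mutually orthogonal diagonal projections $Q'_k$ with each $[P_k : Q'_k] = 0$ and $P_k - Q'_k \in \mathcal{J}$. Once such $Q'_k$ are in hand, \autoref{lem:restricted-conjugation-of-sets-of-projections} furnishes $U = I+K$ with $K \in \mathcal{J}$ and $UP_kU^* = Q'_k$ for each $k$, so that $UNU^* = \sum_k \lambda_k Q'_k$ is diagonal.

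The main obstacle is that the given $Q_k$ commute (lying in a common atomic masa) but need neither be mutually orthogonal nor sum to $I$. To resolve this, I observe that for $i \neq j$, $Q_iQ_j = (Q_i-P_i)Q_j + P_i(Q_j-P_j) \in \mathcal{J}$, and $I - \sum_k Q_k = \sum_k (P_k - Q_k) \in \mathcal{J}$. Since $\mathcal{J}$ is proper these diagonal operators are compact, and having integer spectrum they must be finite rank. Consequently only finitely many basis vectors lie in more than one $Q_k$ or in none of them; reassigning these atoms produces mutually orthogonal diagonal projections $\widetilde{Q}_k$ summing to $I$, each differing from $Q_k$ by a finite-rank projection, so that $P_k - \widetilde{Q}_k \in \mathcal{J}$. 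By \autoref{prop:essential-codimension}\ref{item:orthogonal-sum}, $\sum_k [P_k : \widetilde{Q}_k] = [I:I] = 0$, and then \autoref{lem:sum-zero-each-zero} upgrades the $\widetilde{Q}_k$ to the required $Q'_k$ with each essential codimension zero, completing the reduction.
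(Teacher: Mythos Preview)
Your proof is correct and follows essentially the same route as the paper: both directions match, and in the reverse direction you prove $Q_iQ_j$ is finite rank for $i\neq j$, orthogonalize the $Q_k$ to diagonal projections summing to $I$ by a finite-rank adjustment, then invoke \autoref{prop:essential-codimension}\ref{item:orthogonal-sum}, \autoref{lem:sum-zero-each-zero}, and \autoref{lem:restricted-conjugation-of-sets-of-projections} exactly as the paper does. The only cosmetic differences are that the paper orthogonalizes via an explicit Gram--Schmidt-type recursion rather than your atom-reassignment argument, and that your phrase ``finite-rank projection'' should read ``finite-rank operator'' (the difference $\widetilde{Q}_k - Q_k$ need not be a projection), but this does not affect the argument.
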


\begin{proof}
  Let $N = \sum_{k=1}^m \lambda_k P_k$ be a finite spectrum normal operator with spectral projections $P_k$ associated to the eigenvalues $\lambda_k$.
  One direction is trivial, namely, if $N$ is diagonalizable by a unitary $U=I+K$ with $K \in \mathcal{J}$, then the projections $Q_k := UP_kU^{*}$ are diagonal and $P_k - Q_k \in \mathcal{J}$.
  
  For the other direction, suppose that for each $P_k$ there is a diagonal projection $Q_k$ for which $P_k - Q_k \in \mathcal{J}$.
  The operators $Q_jQ_k$ are projections because $Q_j,Q_k$ are commuting projections. Then since $P_jP_k = \delta_{jk}P_j$, for $j\not=k$ we obtain
  \begin{equation}
    \label{eq:Q_j*Q_k-in-J}
    \begin{split}
      Q_jQ_k &= \big(P_j+(Q_j-P_j)\big)\big(P_k+(Q_k-P_k)\big) \\
      &= (Q_j-P_j)P_k + P_j(Q_k-P_k) + (Q_j-P_j)(Q_k-P_k) \in \mathcal{J}.
    \end{split}
  \end{equation}
  Therefore $Q_jQ_k$ are finite projections when $j \not= k$.

  Now let $Q'_1 := Q_1$ and inductively define $Q'_j = Q_j-Q_j(Q'_1+\cdots+Q'_{j-1})$ for $1 < j < m$ and finally $Q'_m = I-(Q'_1+\cdots+Q'_{m-1})$.
  It is clear that for $1 \le j < m$, $Q'_j$ is in the algebra generated by $\{Q_1,\ldots,Q_j\}$ and is therefore diagonal.
  Moreover, for $1 \le j < m$, by \eqref{eq:Q_j*Q_k-in-J} and induction $Q'_j - Q_j$ is finite rank, and hence $P_j - Q'_j \in \mathcal{J}$.
  Thus, $Q'_m$ is a $\mathcal{J}$-perturbation of $I-(P_1+\cdots+P_{m-1}) = P_m$, and hence $P_m - Q'_m \in \mathcal{J}$ as well.
  By \autoref{prop:essential-codimension}(ii),
  \begin{equation*}
    \sum_{k=1}^m [P_k:Q'_k] = \left[ \sum_{k=1}^m P_k: \sum_{k=1}^m Q'_k \right] = [I:I] = 0.
  \end{equation*}
  So, by \autoref{lem:sum-zero-each-zero}, we may assume by passing to a possibly different collection of diagonal $Q'_k$ that, in fact, $[P_k:Q'_k] = 0$ for $1 \le k \le m$.
  Finally, by \autoref{lem:restricted-conjugation-of-sets-of-projections} there is a unitary $U=I+K$ with $K \in \mathcal{J}$ for which $Q'_k = UP_k U^{*}$ for each $1 \le k \le m$.
  Therefore, $UNU^{*} = \sum_{k=1}^m \lambda_k Q'_k$, which is a diagonal operator.
\end{proof}

\subsection{Consequences of restricted diagonalization}

This subsection is motivated by the following observation about the condition $(d_n) \in \Lim\big(\spec(N)\big)$ in Arveson's theorem.

\begin{proposition}
  \label{prop:renormalized-sum-trace}
  Let $N$ be a normal operator with finite spectrum and let $(d_n)$ be the diagonal of $N$.
  Then $(d_n) \in \Lim\big(\spec(N)\big)$ if and only if there exists a diagonal operator $N' = \diag(x_n)$ such that $spec(N') \subseteq \spec(N)$, and $E(N-N')$ is trace-class, in which case
  \begin{equation}
    \label{eq:renormalized-sum-trace}
    \trace\big(E(N-N')\big) = \sum_{n=1}^{\infty} (d_n - x_n).
  \end{equation}
\end{proposition}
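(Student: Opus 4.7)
The plan is to unwind the definitions and reduce to the fact that a diagonal operator is trace class precisely when its diagonal sequence is absolutely summable. Write $X := \spec(N)$. Because $N'$ belongs to the atomic masa $\mathcal{A}$ one has $E(N') = N'$, while by definition $E(N) = \diag(d_n)$. Consequently
\begin{equation*}
  E(N-N') = \diag(d_n - x_n),
\end{equation*}
so $E(N-N')$ is trace class if and only if $\sum_n |d_n-x_n| < \infty$, and in that case $\trace\bigl(E(N-N')\bigr) = \sum_n (d_n-x_n)$. This simultaneously establishes \eqref{eq:renormalized-sum-trace} whenever either side of the asserted equivalence holds.

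For the forward implication, suppose $(d_n) \in \Lim(X)$. Since $X$ is finite and nonempty, for each $n$ one may choose $x_n \in X$ with $|d_n - x_n| = \dist(d_n, X)$. Put $N' := \diag(x_n)$. Then $\spec(N') = \overline{\{x_n : n\ge 1\}} \subseteq X = \spec(N)$ (as $X$ is closed), and $\sum_n|d_n-x_n| = \sum_n \dist(d_n, X) < \infty$, so $E(N-N')$ is trace class by the observation above.

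For the reverse implication, suppose $N' = \diag(x_n)$ satisfies $\spec(N') \subseteq X$ and $E(N-N')$ is trace class. Each $x_n$ lies in $\spec(N') \subseteq X$, hence $\dist(d_n, X) \le |d_n - x_n|$ for every $n$. Trace-classness of $E(N-N') = \diag(d_n-x_n)$ forces $\sum_n |d_n - x_n| < \infty$, and therefore $\sum_n \dist(d_n,X) < \infty$, i.e.\ $(d_n) \in \Lim(X)$.

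There is no substantive obstacle: the only non-bookkeeping observation is that one needs the \emph{optimal} choice of $x_n$ (nearest point in $X$) to produce a suitable $N'$ in the forward direction, whereas in the reverse direction any admissible $x_n$ automatically majorizes $\dist(d_n,X)$. Everything else is immediate from $E$ being the main-diagonal projection and from the characterization of trace-class diagonal operators.
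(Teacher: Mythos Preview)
Your proof is correct and follows essentially the same approach as the paper: both arguments reduce to the observation that $E(N-N') = \diag(d_n - x_n)$, so trace-classness is equivalent to absolute summability of $(d_n - x_n)$, and then handle each direction by choosing nearest points $x_n \in X$ (forward) and bounding $\dist(d_n,X) \le |d_n - x_n|$ (reverse). The only cosmetic difference is that you front-load the identity $E(N-N') = \diag(d_n - x_n)$ and the trace formula, whereas the paper derives them within each implication.
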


\begin{proof}
  ($\Rightarrow$)
  Suppose $(d_n) \in \Lim\big(\spec(N)\big)$.
  Then there is a sequence $(x_n)$ with $x_n \in \spec(N)$ such that $(d_n - x_n)$ is absolutely summable, and we may take $N' := \diag(x_n)$.
  Therefore, since $(d_n - x_n)$ is absolutely summable,
  \begin{equation*}
    \trace \abs{E(N-N')} = \sum_{n=1}^{\infty} \abs{d_n-x_n} < \infty,
  \end{equation*}
  and hence $E(N-N')$ is trace-class.

  ($\Leftarrow$)
  Suppose $N'$ is a diagonal operator with $\spec(N') \subseteq \spec(N)$ and $E(N-N')$ trace-class, and let $(x_n)$ denote the diagonal of $N'$.
  Then $x_n \in \spec(N') \subseteq \spec(N)$ and since $E(N-N')$ is trace-class,
  \begin{equation*}
    \sum_{n=1}^{\infty} \abs{d_n-x_n} = \trace \abs{E(N-N')} < \infty.
  \end{equation*}
  Therefore $(d_n-x_n)$ is absolutely summable and hence $d_n \in \Lim\big(\spec(N)\big)$.

  Notice that whenever either of the equivalent conditions is satisfied, we have the equality
  \begin{equation*}
    \trace\big(E(N-N')\big) = \sum_{n=1}^{\infty} (d_n - x_n). \qedhere
  \end{equation*}
\end{proof}

The remainder of the section is devoted to analyzing the expression $E(N-N')$ when $N'$ is a restricted diagonalization of a normal operator $N$ (not necessarily with finite spectrum), i.e., when $N' = UNU^{*}$ where $U = I + K$ is unitary and $K \in \mathcal{J}$.

As in \cite{DFWW-2004-AM}, the \emph{arithmetic mean closure} $\mathcal{J}^{-}$ of an operator ideal $\mathcal{J}$ is the set of operators $T$ whose singular values are weakly majorized by the singular values of an operator $A \in \mathcal{J}$; that is, if $s(T)$ denotes the singular value sequence of a compact operator $T$,
\begin{equation*}
  \mathcal{J}^- := \left\{ T \in B(\Hil) \vmid \exists B \in \mathcal{J}, \forall n\in \mathbb{N},\  \sum_{j=1}^n s_j(T) \le \sum_{j=1}^n s_j(B) \right\}.
\end{equation*}
An ideal $\mathcal{J}$ is said to be \emph{arithmetic mean closed} if $\mathcal{J} = \mathcal{J}^-$.
Common examples of arithmetic mean closed ideals are the Schatten ideals $\mathcal{C}_p$ of which the trace-class ideal $\mathcal{C}_1$ and Hilbert--Schmidt ideal $\mathcal{C}_2$ are special cases.

In \cite{KW-2011-IUMJ}, Kaftal and Weiss investigated the relationship between an ideal $\mathcal{J}$ and the elements of its image $E(\mathcal{J})$ under a trace-preserving conditional expectation onto an atomic masa $\mathcal{A}$, and they established the following characterization \cite[Corollary~4.4]{KW-2011-IUMJ}.

\begin{corollary}
  \label{cor:diagonal-invariance}
  For every operator ideal $\mathcal{J}$, $E(\mathcal{J}) = \mathcal{J}^- \cap \mathcal{A}$.
\end{corollary}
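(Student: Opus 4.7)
The plan is to verify the two containments separately. For $E(\mathcal{J}) \subseteq \mathcal{J}^- \cap \mathcal{A}$, the inclusion $E(\mathcal{J}) \subseteq \mathcal{A}$ is immediate because $E$ maps into $\mathcal{A}$ by definition, so the content is showing $E(\mathcal{J}) \subseteq \mathcal{J}^-$. For $\mathcal{J}^- \cap \mathcal{A} \subseteq E(\mathcal{J})$, given a diagonal operator $D \in \mathcal{J}^-$, one must manufacture some $B \in \mathcal{J}$ whose diagonal equals $D$; this is the substantive direction.

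For the forward inclusion, I would take $T \in \mathcal{J}$ and show that the element $B := T$ itself witnesses $E(T) \in \mathcal{J}^-$, i.e., $\sum_{j=1}^n s_j(E(T)) \le \sum_{j=1}^n s_j(T)$ for every $n$. This is the classical pinching inequality: fix $n$ and let $e_{n_1},\ldots,e_{n_k}$ (with $k \le n$) be the basis elements corresponding to the $n$ largest values of $|\langle T e_j, e_j\rangle|$, and let $\theta_1, \ldots, \theta_k$ be the phases aligning those inner products. Applying Ky Fan's variational characterization $\sum_{j=1}^n s_j(T) = \sup\{ |\trace(VT)| : \rank V \le n,\ \|V\| \le 1\}$ to the rank-$n$ operator $V = \sum_i \bar\theta_i\, e_{n_i} \otimes e_{n_i}$ yields the required inequality.

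For the backward inclusion, given $D \in \mathcal{J}^- \cap \mathcal{A}$ with diagonal $(d_n)$, first decompose $D$ into real and imaginary parts and each of those into positive and negative parts; since each piece remains diagonal and lies in $\mathcal{J}^-$, it suffices to treat $D \ge 0$ and then assemble. Choose $A \in \mathcal{J}$ with $s(D) \prec_w s(A)$. The task is then to construct $B \in \mathcal{J}$ with $E(B) = D$. I would do this via a block-matrix construction: after rearranging indices so that $(d_n)$ is nonincreasing, partition $\mathbb{N}$ into blocks and, on each block, realize a finite-dimensional Schur-Horn-type filling in which the block's diagonal matches the corresponding $d_n$'s while its singular values are controlled by $s_j(A)$. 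Gluing these blocks produces $B$ whose singular value sequence is dominated by a rearrangement of $s(A)$ (up to a bounded factor), and hence $B \in \mathcal{J}$ by solidity of the characteristic set of $\mathcal{J}$.

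The main obstacle is making the block construction in the backward direction precise enough to guarantee both $E(B) = D$ exactly and $s(B) \le c \cdot s(A)^*$ (some permutation) so that $B$ stays inside $\mathcal{J}$, not merely inside $\mathcal{J}^-$. The delicate point is that weak majorization $s(D) \prec_w s(A)$ need not be an entrywise inequality, so some mass of $D$ may lie at indices where $d_n > s_n(A)$; this must be compensated by off-diagonal entries in $B$ that simultaneously (i) produce the desired diagonal under $E$ and (ii) keep each block's singular values dominated by the appropriate terms of $s(A)$. This is exactly the content of the Horn-type infinite-dimensional majorization theorem that underlies \textcite{KW-2011-IUMJ}'s argument, and I would cite or reconstruct that finite-block Schur-Horn step as the technical core.
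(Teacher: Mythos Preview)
The paper does not prove this corollary at all: it is quoted verbatim from \cite[Corollary~4.4]{KW-2011-IUMJ} and used as a black box, so there is no in-paper argument to compare your proposal against.

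As for the content of your sketch: the forward inclusion via the Ky Fan variational formula is the standard pinching/submajorization argument and is correct. For the backward inclusion you have correctly isolated the real issue --- given $D \in \mathcal{J}^- \cap \mathcal{A}$ one must produce $B$ in $\mathcal{J}$ itself, not merely in $\mathcal{J}^-$, with $E(B)=D$ --- and your outline (reduce to $D \ge 0$, pick a majorizing $A \in \mathcal{J}$, build $B$ from finite Schur--Horn blocks whose singular values are controlled by those of $A$) is indeed the shape of the Kaftal--Weiss proof. The step you flag as the ``main obstacle'' is genuine: one needs the block construction to yield an $s(B)$ satisfying a hereditary (entrywise, up to ampliation and constant) bound by $s(A)$, since that is what forces $B \in \mathcal{J}$ via the characteristic set. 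Your proposal does not actually carry this out but defers to \cite{KW-2011-IUMJ}, which is exactly what the present paper does; so in effect your proposal and the paper agree that this is a citation rather than something to be reproved here.
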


Our next result says if an operator $N$ can be diagonalized by a unitary $U=I+K$ with $K \in \mathcal{J}$ then the diagonals of $N$ and its diagonalization differ by an element of the arithmetic mean closure of $\mathcal{J}^2$.

\begin{proposition}
  \label{prop:diagonalization-by-I-plus-K-necessity}
  Let $N$ be a diagonal operator, $\mathcal{J}$ an operator ideal, and $U = I+K$ a unitary with $K \in \mathcal{J}$.
  Then $E(UNU^{*}-N) \in (\mathcal{J}^2)^-$.
\end{proposition}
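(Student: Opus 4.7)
The plan is to rewrite $UNU^{*}-N$ algebraically and see most of it vanish under $E$. Since $U$ is unitary, $UU^{*}=I$, so
\begin{equation*}
  UNU^{*} - N = UNU^{*} - NUU^{*} = [U,N]U^{*} = [K,N](I+K^{*}) = [K,N] + KNK^{*} - NKK^{*}.
\end{equation*}
This isolates the commutator $[K,N]$ from the bilinear-in-$K$ remainder $KNK^{*}-NKK^{*}$. Note that only the latter piece has any hope of lying in $\mathcal{J}^2$; the commutator $[K,N]$ is generically only in $\mathcal{J}$, so the whole argument hinges on showing that $E$ annihilates it.

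That annihilation is immediate from the hypothesis that $N$ is diagonal. Indeed, $N$ then lies in the atomic masa $\mathcal{A}$, and $E$ is an $\mathcal{A}$-bimodule map, so $E(KN)=E(K)N$ and $E(NK)=NE(K)$. Commutativity of $\mathcal{A}$ yields $E(K)N = NE(K)$, hence $E([K,N])=0$ and therefore
\begin{equation*}
  E(UNU^{*}-N) \;=\; E(KNK^{*} - NKK^{*}).
\end{equation*}

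For the cleanup, I would observe that operator ideals are $*$-closed, so $K^{*}\in\mathcal{J}$, and since $\mathcal{J}^2$ is itself a two-sided ideal, $KNK^{*} = K\cdot(NK^{*})\in \mathcal{J}\cdot\mathcal{J} = \mathcal{J}^2$ and $NKK^{*}=(NK)\cdot K^{*}\in\mathcal{J}^2$. Thus $KNK^{*}-NKK^{*}\in\mathcal{J}^2$, and \autoref{cor:diagonal-invariance} of Kaftal--Weiss finishes the job:
\begin{equation*}
  E(UNU^{*}-N) \;=\; E(KNK^{*}-NKK^{*}) \;\in\; E(\mathcal{J}^2) \;=\; (\mathcal{J}^2)^{-}\cap\mathcal{A} \;\subseteq\; (\mathcal{J}^2)^{-}.
\end{equation*}
There is no real obstacle beyond recognizing the commutator cancellation; everything else is routine ideal arithmetic and a direct appeal to the Kaftal--Weiss identification of $E(\mathcal{J}^2)$.
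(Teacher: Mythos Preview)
Your proof is correct and follows essentially the same strategy as the paper's: expand $UNU^{*}-N$ algebraically, use that $E$ is an $\mathcal{A}$-bimodule map with $N\in\mathcal{A}$ to kill the linear-in-$K$ part, and invoke the Kaftal--Weiss identification $E(\mathcal{J}^2)=(\mathcal{J}^2)^{-}\cap\mathcal{A}$. The only cosmetic difference is that you isolate the commutator $[K,N]$ (which $E$ annihilates) leaving the manifestly $\mathcal{J}^2$ remainder $KNK^{*}-NKK^{*}$, whereas the paper writes $UNU^{*}-N=KN+NK^{*}+KNK^{*}$ and then uses the unitarity relation $K+K^{*}=-K^{*}K$ to recognize $E(KN)+E(NK^{*})=E(K+K^{*})N$ as already lying in $(\mathcal{J}^2)^{-}$.
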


\begin{proof}
  Irrespective of the condition $K \in \mathcal{J}$, note that $U = I+K$ is unitary if and only if $K$ is normal and $K+K^{*} = -K^{*}K$ because
  \begin{align*}
    UU^{*} &= I + K + K^{*} + KK^{*} \\
    U^{*}U &= I + K + K^{*} + K^{*}K.
  \end{align*}

  Then
  \begin{align*}
    E(UNU^{*}-N) &= E(KN+NK^{*}+KNK^{*}) \\
                 &= E(KN)+E(NK^{*}) + E(KNK^{*}) \\
                 &= E(K)N+NE(K^{*}) + E(KNK^{*}) \\
                 &= E(K+K^{*})N + E(KNK^{*})  \in (\mathcal{J}^2)^-,
  \end{align*}
  by \autoref{cor:diagonal-invariance}.
\end{proof}

When $\mathcal{J} = \mathcal{C}_2$, which is the primary concern in this paper, we can say more.

\begin{theorem}
  \label{thm:expectation-trace-zero}
  Suppose $N$ is a normal operator.
  There is an atomic masa such that for every unitary $U = I + K$ with $K$ Hilbert--Schmidt, $E(UNU^{*}-N)$ is trace-class and has trace zero.
  Moreover, if $N$ is diagonalizable, any atomic masa containing $N$ suffices.
\end{theorem}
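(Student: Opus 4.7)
The plan is to treat the moreover clause first by a direct computation on top of \autoref{prop:diagonalization-by-I-plus-K-necessity}, then deduce the general case by a Weyl--von Neumann--Kuroda type decomposition of $N$ into a diagonal plus a Hilbert--Schmidt perturbation.

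Suppose first that $N \in \mathcal{A}$. Then \autoref{prop:diagonalization-by-I-plus-K-necessity} applied with $\mathcal{J} = \mathcal{C}_2$ places $E(UNU^*-N)$ in $(\mathcal{C}_2^2)^- = \mathcal{C}_1$, since $\mathcal{C}_1$ is arithmetic-mean closed. Inspection of its proof, using that $E$ is an $\mathcal{A}$-bimodule map and $N \in \mathcal{A}$, specializes the formula there to
\[
E(UNU^*-N) = E(K+K^*)N + E(KNK^*).
\]
The unitarity identity $K+K^* = -K^*K$ identifies the diagonal entries of $E(K+K^*)$ as $-\norm{Ke_n}^2$, giving $\trace\big(E(K+K^*)N\big) = -\sum_n d_n\norm{Ke_n}^2$ with $d_n := \langle Ne_n, e_n\rangle$. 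Meanwhile $KNK^* \in \mathcal{C}_1$ by H\"older for Schatten classes, and cyclicity combined with $N \in \mathcal{A}$ yields $\trace(KNK^*) = \trace(NK^*K) = \sum_n d_n \norm{Ke_n}^2$. The two contributions cancel exactly, so the trace vanishes.

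For a general normal $N$, I plan to invoke the normal-operator version of Kuroda's theorem due to Voiculescu to select an atomic masa $\mathcal{A}$ and a diagonal $D \in \mathcal{A}$ with $C := N - D \in \mathcal{C}_2$, then split
\[
E(UNU^*-N) = E(UDU^*-D) + E(UCU^*-C).
\]
The first summand is handled by the diagonal case above. For the second, H\"older gives $UCU^*-C = KC + CK^* + KCK^* \in \mathcal{C}_1$, so $E(UCU^*-C) \in \mathcal{C}_1$. Rewriting $UCU^*-C = KCU^* + CK^*$ and using cyclicity yields
\[
\trace(UCU^*-C) = \trace\big(C(U^*K + K^*)\big) = 0,
\]
because $U^*K + K^* = (I+K^*)K + K^* = (K+K^*) + K^*K = 0$ by unitarity. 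Since $E$ is trace-preserving on $\mathcal{C}_1$, this summand has trace zero as well, so combining the two summands gives both claims.

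The main obstacle is pinning down the decomposition $N = D + C$ with $D$ diagonal and $C \in \mathcal{C}_2$ in the non-diagonalizable case; this Kuroda--Voiculescu input is the sole substantive external ingredient and must be cited at the correct level of generality for bounded normal operators. All remaining manipulations reduce to Schatten-class H\"older, cyclicity of trace, and the unitarity identity $K + K^* = -K^*K$, which is the combinatorial engine driving every cancellation.
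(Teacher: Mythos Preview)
Your proposal is correct and follows essentially the same approach as the paper: first handle the diagonal case via the formula from \autoref{prop:diagonalization-by-I-plus-K-necessity} together with the unitarity identity $K+K^{*}=-K^{*}K$, then reduce the general normal case to a diagonal-plus-Hilbert--Schmidt decomposition via Voiculescu's extension of the Weyl--von~Neumann--Berg theorem. Your algebraic route in the second step---factoring $UCU^{*}-C = KCU^{*}+CK^{*}$ and observing $U^{*}K+K^{*}=0$---is a minor variant of the paper's direct expansion, but the content is the same.
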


\begin{proof}
  Suppose first that $N$ is diagonalizable and consider an atomic masa in which $N$ lies.
  Let $U = I+K$ be unitary with $K$ Hilbert--Schmidt.
  By \autoref{prop:diagonalization-by-I-plus-K-necessity} with $\mathcal{J} = \mathcal{C}_2$ and its proof, each term of $E(UNU^{*}-N) = E(K+K^{*})N + E(KNK^{*})$ is trace-class because $K+K^{*} = -K^{*}K$ and $KNK^{*}$ are trace-class, and because the trace-class is arithmetic mean closed (in fact, it is the \emph{smallest} arithmetic mean closed ideal).
  Then, because the conditional expectation is trace-preserving, we find
  \begin{align*}
    \trace\big(E(KNK^{*})\big) &= \trace(KNK^{*}) = \trace(K^{*}KN) \\
                       &= -\trace((K+K^{*})N) = -\trace(E(K+K^{*})N),
  \end{align*}
  and therefore $\trace\big(E(UNU^{*}-N)\big) = 0$.

  Now suppose $N$ is an arbitrary normal operator.
  By Voiculescu's extension \cite{Voi-1979-JOT} of the Weyl--von Neumann--Berg theorem we can write $N = D+J$ where $D$ is diagonalizable and $J$ is Hilbert--Schmidt.
  Then $UJU^{*}-J = KJ+JK^{*}+KJK^{*}$ and each term is trace-class.
  Moreover,
  \begin{align*}
    \trace(KJK^{*}) &= \trace(K^{*}KJ) = -\trace((K+K^{*})J) \\
                    &= -\trace(KJ)-\trace(K^{*}J) = -\trace(KJ)-\trace(JK^{*}),
  \end{align*}
  and hence $\trace(UJU^{*}-J) = 0$.
  Therefore, if $E$ is a conditional expectation onto an atomic masa containing $D$, then $E(UNU^{*}-N) = E(UDU^{*}-D) + E(UJU^{*}-J)$ has trace zero.
\end{proof}

The previous theorem establishes a kind of \emph{trace invariance} property for arbitrary normal operators.
To see why we use this terminology, consider that a trace-class operator $A$ has a trace which is invariant under unitary conjugation.
That is, for any unitary $U$, $\trace A = \trace (UAU^{*})$.
Rearranging, we can write this as $\trace(UAU^{*}-A) = 0$, and since the canonical expectation is trace-invariant, we can rewrite this as $\trace\big(E(UAU^{*}-A)\big) = 0$.
Under more restrictive hypotheses, \autoref{thm:expectation-trace-zero} ensures the same condition for normal operators instead of trace-class operators.

\begin{remark}
  The reader may have noticed that the normality in the previous theorem was only used in order to write the operator as a Hilbert--Schmidt perturbation of a diagonal operator.
  Therefore, the above theorem remains valid under this substitution of the hypothesis, and a slightly more general result is obtained.
\end{remark}

\begin{example}
  One may wonder if in \autoref{prop:diagonalization-by-I-plus-K-necessity} and \autoref{thm:expectation-trace-zero} we may take any trace-preserving conditional expectation instead of the special ones chosen.
  The answer is negative in general as this example shows.
  Consider commuting positive operators $C,S$ in $B(\Hil)$ with zero kernel satisfying $C^2 + S^2 = I$.
  Then consider the operators $P,U \in M_2\big(B(\Hil)\big) \cong B(\Hil \oplus \Hil)$
  \begin{equation*}
    P :=
    \frac{1}{\sqrt{2}}
    \begin{pmatrix}
      I & I \\
      I & I \\
    \end{pmatrix}
    \qquad
    U :=
    \begin{pmatrix}
      C & S \\
      -S & C \\
    \end{pmatrix},
  \end{equation*}
  which are a projection and a unitary, respectively.
  Thus
  \begin{equation*}
    UPU^{*} =
    \frac{1}{\sqrt{2}}
    \begin{pmatrix}
      I+2CS & C^2 - S^2 \\
      C^2 - S^2 & I-2CS \\
    \end{pmatrix}
  \end{equation*}
  Now, choose $S = \diag(\sin(\theta_n))$ and $C = \diag(\cos(\theta_n))$ with $(\theta_n) \in \ell^2 \setminus \ell^1$.
  Then $S \in \mathcal{C}_2, C-I \in \mathcal{C}_1$ and hence $U-(I \oplus I) \in \mathcal{C}_2$.
  Moreover, $2CS = \diag(\sin(2\theta_n))$ which is Hilbert--Schmidt but not trace-class.
  Thus, if $E$ is the expectation onto an atomic masa containing $C,S$, and $\tilde{E} := E \oplus E$, then
  $\tilde{E}(UPU^{*}-P) = \frac{1}{\sqrt{2}} (2CS \oplus -2CS) \in \mathcal{C}_2 \setminus \mathcal{C}_1$.
\end{example}

\section{Arveson's Theorem Revisited}
\label{sec:arveson}

In this section we apply the results concerning restricted diagonalization to prove a few key facts which will yield a reformulation and extension of Arveson's theorem (\autoref{thm:arveson-reformulated}).
Our first result in this direction is \autoref{thm:diagonalizable-by-I-plus-HS} which characterizes the condition $(d_n) \in \Lim\big(\spec(N)\big)$ in terms of restricted diagonalization.
In order to prove \autoref{thm:diagonalizable-by-I-plus-HS}, we use a straightforward geometric lemma which serves a similar purpose as \cite[Lemma~1]{Arv-2007-PNASU}.

\begin{lemma}
  \label{lem:convexity-coefficient-corner}
  Suppose $\lambda_1,\ldots,\lambda_m \in \mathbb{C}$ are distinct and $x = \sum_{j=1}^m c_j \lambda_j$ is a convex combination, and $L$ is a line separating $\lambda_k$ from the remaining $\lambda_j$.
  If $x$ lies on a line parallel to $L$ separating $\lambda_k$ from $L$, then
  \begin{equation*}
    \sum_{\substack{j=1 \\ j\not=k}}^m c_j \le \frac{\abs{x - \lambda_k}}{\dist(\lambda_k, L)}.
  \end{equation*}
\end{lemma}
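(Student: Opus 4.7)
The plan is to reduce the lemma to a one-dimensional statement by projecting onto the signed-distance functional associated with $L$. Parameterize the line as $L = \{z \in \mathbb{C} : \mathrm{Re}(\bar n z) = r\}$ for some unit vector $n \in \mathbb{C}$ normal to $L$ and some $r \in \mathbb{R}$. By the separation hypothesis, after possibly replacing $n$ by $-n$, we may assume $\mathrm{Re}(\bar n \lambda_k) \ge r \ge \mathrm{Re}(\bar n \lambda_j)$ for all $j \neq k$. Then $\dist(\lambda_k, L) = \mathrm{Re}(\bar n \lambda_k) - r$, and for each $j \neq k$ we obtain the crucial inequality
\begin{equation*}
  \mathrm{Re}(\bar n (\lambda_k - \lambda_j)) = \bigl(\mathrm{Re}(\bar n \lambda_k) - r\bigr) + \bigl(r - \mathrm{Re}(\bar n \lambda_j)\bigr) \ge \dist(\lambda_k, L).
\end{equation*}

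Next I would apply the linear functional $\mathrm{Re}(\bar n \,\cdot\,)$ to the identity $\lambda_k - x = \sum_{j \neq k} c_j (\lambda_k - \lambda_j)$, which comes from $\sum_j c_j = 1$. Combined with the inequality above and nonnegativity of the $c_j$, this yields
\begin{equation*}
  \mathrm{Re}(\bar n (\lambda_k - x)) = \sum_{j \neq k} c_j \,\mathrm{Re}(\bar n (\lambda_k - \lambda_j)) \ge \dist(\lambda_k, L) \sum_{j \neq k} c_j.
\end{equation*}
Finally, since $n$ is a unit vector, $\mathrm{Re}(\bar n (\lambda_k - x)) \le \abs{\bar n (\lambda_k - x)} = \abs{x - \lambda_k}$, and dividing by $\dist(\lambda_k, L) > 0$ gives the claim.

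The role of the hypothesis that $x$ lies on a parallel line between $\lambda_k$ and $L$ is only to ensure the setup is nondegenerate (in particular $\dist(\lambda_k, L) > 0$, so the quotient makes sense, and the inequality has content because the left-hand side is strictly positive); the inequality itself follows purely from the separation and convexity. There is no real obstacle here: the entire argument is a one-line projection onto the normal direction, with the convex-combination bound playing the role of the key estimate. The only care needed is the correct orientation of $n$ so that both signed distances contribute with the same sign.
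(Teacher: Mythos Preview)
Your proof is correct and takes essentially the same approach as the paper's—both project onto the direction normal to $L$ and compare signed distances—though the paper first normalizes coordinates (rotating, translating, and scaling so that $\lambda_k=1$, $L$ is the vertical line through $-a$, and $\Re(x)=0$), while you work directly with the linear functional $z \mapsto \Re(\bar n z)$. Your coordinate-free version has the bonus of making transparent, as you correctly observe, that the hypothesis on $x$'s position relative to $L$ is not actually needed for the inequality; the paper's normalization uses that hypothesis to obtain $a \ge 0$, but this is inessential.
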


\begin{proof}
  Relabel the $\lambda_j$ if necessary so that $k=1$.
  By applying a rotation, translation and scaling (which preserve proportional distances), we may suppose that $\lambda_1 = 1$ and $L = -a + i\mathbb{R}$ for some $a \ge 0$ so that the real part $\Re(x) = 0$.
  Note that $-a \ge \max_{j \ge 2} \{ \Re(\lambda_j)\}$.
  Since $0 \in [-a,1]$ we may write
  \begin{equation*}
    t (-a) + (1-t) 1 = 0, \quad\text{for}\quad t = \frac{1}{1 + a}
  \end{equation*}
  Now
  \begin{equation*}
    0 = \Re(x) = \sum_{j=1}^m c_j \Re(\lambda_j) \le \Bigg( \sum_{j=2}^m c_j \Bigg) \max_{j \ge 2} \{\Re(\lambda_j)\} + c_1 \lambda_1 \le \Bigg( \sum_{j=2}^m c_j \Bigg) (-a) + c_1 1.
  \end{equation*}
  Since we have two convex combinations of $-a, 1$ and the latter is closer to $1$ than the former, the convexity coefficients satisfy
  \begin{equation*}
    \sum_{j = 2}^m c_j \le t = \frac{1}{1+a} = \frac{\dist(\Re(x),\lambda_1)}{\dist(\lambda_1,L)} \le  \frac{\abs{x-\lambda_1}}{\dist(\lambda_1,L)}. \qedhere
  \end{equation*}
\end{proof}

\begin{theorem}
  \label{thm:diagonalizable-by-I-plus-HS}
  Let $N$ be a normal operator with finite spectrum and diagonal $(d_n)$, and let $X$ be the vertices of the convex hull of its essential spectrum.
  Then $(d_n) \in \Lim(X)$ if and only if $\essspec(N) = X$ and $N$ is diagonalizable by a unitary which is a Hilbert--Schmidt perturbation of the identity.
\end{theorem}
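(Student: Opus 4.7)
The plan is to handle the two directions separately, with the forward direction being the more substantial.

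\textbf{Reverse direction ($\Leftarrow$).} Assume $\essspec(N) = X$ and that $U = I + K$ with $K \in \mathcal{C}_2$ diagonalizes $N$, so $N' := UNU^*$ is diagonal with diagonal $(x_n) \subseteq \spec(N)$. \autoref{thm:expectation-trace-zero} gives that $E(N' - N) = \diag(x_n - d_n)$ is trace-class, so $(d_n - x_n) \in \ell^1$. Since $\essspec(N) = X$, every $\mu \in \spec(N) \setminus X$ has a finite-rank spectral projection $P_\mu$, hence each $Q_\mu := U P_\mu U^*$ is a finite-rank diagonal projection and $\{n : x_n \notin X\}$ is finite. Replacing $x_n$ by any element of $X$ for those finitely many indices yields an $\ell^1$-approximation of $(d_n)$ by a sequence in $X$, so $(d_n) \in \Lim(X)$.

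\textbf{Forward direction ($\Rightarrow$).} Assume $(d_n) \in \Lim(X)$. The plan is to first reduce to the case $\spec(N) = \essspec(N)$ by a finite-rank perturbation, then verify the hypothesis of \autoref{thm:finite-spectrum-normal-restricted-diagonalizability} via \autoref{lem:convexity-coefficient-corner} and \autoref{prop:kadison-integer-essential-codimension}. Fixing $\lambda_1 \in X$ and setting $F := \spec(N) \setminus \essspec(N)$, define $\tilde N := N + \sum_{\mu \in F}(\lambda_1 - \mu) P_\mu$. This is a finite-rank perturbation with $\spec(\tilde N) = \essspec(\tilde N) = \essspec(N)$, and since each $P_\mu$ has finite rank the diagonal $(\tilde d_n)$ of $\tilde N$ satisfies $\sum_n |\tilde d_n - d_n| \le \sum_{\mu \in F}|\lambda_1 - \mu|\rank(P_\mu) < \infty$, hence $(\tilde d_n) \in \Lim(X)$.

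Write $\tilde d_n = \sum_{\nu \in \spec(\tilde N)} \nu \tilde p_{\nu, n}$ with $\tilde p_{\nu, n} := \angles{P^{\tilde N}_\nu e_n, e_n}$. For each vertex $\lambda_k \in X$ choose a line $L_k$ strictly separating $\lambda_k$ from the finite set $\spec(\tilde N) \setminus \{\lambda_k\}$ (possible because $\lambda_k$ is a vertex of $\conv(\spec(\tilde N)) = \conv(X)$), set $\delta_k := \dist(\lambda_k, L_k) > 0$ and $\delta := \min_k \delta_k$, let $x_n \in X$ be a nearest vertex to $\tilde d_n$, and let $Q_k$ be the diagonal projection onto $\spans\{e_n : x_n = \lambda_k\}$. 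Since $\dist(\tilde d_n, X) \to 0$, for all but finitely many $n$ one has $|\tilde d_n - x_n| < \delta$, placing $\tilde d_n$ on the $x_n$-side of $L_{x_n}$ and making \autoref{lem:convexity-coefficient-corner} yield $\sum_{\nu \ne x_n} \tilde p_{\nu, n} \le \dist(\tilde d_n, X)/\delta$. Summing these inequalities appropriately gives, for each $\lambda_k \in X$, finiteness of both $a_k := \sum_{x_n \ne \lambda_k} \tilde p_{\lambda_k, n}$ and $b_k := \sum_{x_n = \lambda_k}(1 - \tilde p_{\lambda_k, n})$ in the notation of \autoref{prop:kadison-integer-essential-codimension}, so $P^{\tilde N}_{\lambda_k} - Q_k \in \mathcal{C}_2$.

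For any $\mu \in \spec(\tilde N) \setminus X$, the same bound gives $\tilde p_{\mu, n} \le \dist(\tilde d_n, X)/\delta$ for cofinitely many $n$, hence $\sum_n \tilde p_{\mu, n} < \infty$; were $\mu \in \essspec(\tilde N) \setminus X$, this would contradict $\rank(P^{\tilde N}_\mu) = \infty$, so $\essspec(\tilde N) = X$ and therefore $\essspec(N) = X$. Finally, \autoref{thm:finite-spectrum-normal-restricted-diagonalizability} applied to $\tilde N$ produces a diagonalizing $\tilde U = I + \tilde K$ with $\tilde K \in \mathcal{C}_2$; because the spectral projections of $N$ differ from those of $\tilde N$ only by finite-rank operators supported on $F$, a second application of that theorem yields the desired $U = I + K$ with $K \in \mathcal{C}_2$ diagonalizing $N$. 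I expect the main obstacle to be the geometric bookkeeping leading to $\sum_{\nu \ne x_n}\tilde p_{\nu, n} \le \dist(\tilde d_n, X)/\delta$: one must verify that for all but finitely many $n$ the point $\tilde d_n$ lies in the ball $B(x_n, \delta)$, which in turn sits inside the $x_n$-side of $L_{x_n}$ so that \autoref{lem:convexity-coefficient-corner} applies.
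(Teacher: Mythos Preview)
Your proposal is correct and follows essentially the same approach as the paper. The paper performs the finite-rank reduction to $\spec(N)=\essspec(N)$ once up front for both directions, whereas you handle the reverse direction directly (dealing with the finitely many $x_n\notin X$ by hand) and do the reduction only in the forward direction; the paper also invokes \autoref{cor:restricted-diagonalization-of-projections} where you appeal to \autoref{prop:kadison-integer-essential-codimension}, but these are equivalent routes and the geometric core via \autoref{lem:convexity-coefficient-corner} is identical.
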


\begin{proof}
  We first reduce to the case when $\spec(N) = \essspec(N)$.
  Since $N$ is a normal operator with finite spectrum, by the spectral theorem there is a finite rank perturbation $N'$ of $N$ for which $N'$ is normal and $\spec(N') = \essspec(N') = \essspec(N)$.
  In particular, if $P_{\lambda}$ are the spectral projections of $N$ onto $\{\lambda\}$, and $\lambda' \in \essspec(N)$ is a distinguished element, then we can choose
  \begin{equation*}
    N' := \lambda' P + \sum_{\lambda \in \essspec(N)} \lambda P_{\lambda}, \quad\text{where}\quad P = \sum_{\lambda \notin \essspec(N)} P_{\lambda}.
  \end{equation*}
  Since $N'-N$ is finite rank, the diagonals of $N'$ and $N$ differ by an absolutely summable sequence, so $(d_n) \in \Lim(X)$ if and only if the diagonal of $N'$ is in $\Lim(X)$.
  Moreover, the spectral projections of $N$ and $N'$ differ from one another by finite projections.
  Therefore, the spectral projections of $N$ each differ from a diagonal projection by a Hilbert--Schmidt operator if and only if the same holds true for $N'$.
  By \autoref{thm:finite-spectrum-normal-restricted-diagonalizability}, $N$ is diagonalizable by a unitary which is a Hilbert--Schmidt perturbation of the identity if and only if $N'$ is as well.
  Therefore, by the above reduction, it suffices to prove the theorem with the added assumption that $\spec(N) = \essspec(N)$.
  
  (Proof of $\Rightarrow$)
  Enumerate the elements of $\essspec(N) = \spec(N)$ as $\lambda_1,\ldots,\lambda_m$.
  Let $P_j$ denote the spectral projection corresponding to the eigenvalue $\lambda_j$, so that $N = \sum_{j=1}^m \lambda_j P_j$.
  Let $\{e_n\}_{n=1}^{\infty}$ denote the orthonormal basis corresponding to the diagonal $(d_n)$.
  Suppose $(d_n) \in \Lim(X)$, and so there exist $x_n \in X$ for which $(d_n - x_n) \in \ell^1$.
  Let $\Lambda_k := \{ n \in \mathbb{N} \mid x_n = \lambda_k\}$ be the index set where the sequence $(x_n)$ takes the value $\lambda_k \in X$.

  The projections $P_j$ sum to the identity, so for each $n \in \mathbb{N}$, $\sum_{j=1}^m \angles{P_j e_n,e_n} = 1$ and therefore
  \begin{equation*}
    d_n = \sangles{Ne_n, e_n} = \sum_{j=1}^m \sangles{P_j e_n, e_n} \lambda_j
  \end{equation*}
  is a convex combination of the spectrum.

  For $\lambda_k \in X$, let $L_k$ be a line separating $\lambda_k$ from the remaining elements of $\essspec(N)$.
  Such a line $L_k$ exists because $\lambda_k$ is an extreme point of the convex hull of $\essspec(N)$, and this is a finite set.
  Since $(d_n) \in \Lim (X)$ we know that $(d_n - \lambda_k)_{n \in \Lambda_k}$ is absolutely summable for every $k$.
  Therefore, for all but finitely many indices $n \in \Lambda_k$, the diagonal entry $d_n$ lies on a line parallel to $L_k$ separating $\lambda_k$ from $L_k$ and hence also $\essspec(N) \setminus \{ \lambda_k \}$.

  By \autoref{lem:convexity-coefficient-corner}, for these indices $n \in \Lambda_k$,
  \begin{equation}
    \label{eq:P_j-in-ell^1}
    \sum_{\substack{j=1 \\ j\not=k}}^m \sangles{P_j e_n, e_n} \le \frac{\abs{d_n - \lambda_k}}{\dist(\lambda_k,L_k)}.
  \end{equation}
  Since this inequality holds for all but finitely many $n \in \Lambda_k$, and $\dist(\lambda_k,L_k)$ is independent of $n \in \Lambda_k$, and $(d_n - \lambda_k)_{n \in \Lambda_k}$ is absolutely summable, \eqref{eq:P_j-in-ell^1} proves $\big(\langle P_j e_n, e_n \rangle\big)_{n \in \Lambda_k}$ lies in $\Lim (\{0\}) = \ell^1$ when $j \not= k$.
  If $\lambda_j \in \essspec(N) \setminus X$, by letting $\lambda_k$ run through $X$, we find $\big(\langle P_j e_n, e_n \rangle\big)_{n \in \mathbb{N}}$ is absolutely summable since $\bigcup_{\lambda_k \in X} \Lambda_k = \mathbb{N}$.
  This implies $P_j$ is trace-class and hence a finite projection, contradicting the fact that $\lambda_j \in \essspec(N)$.
  Therefore $X = \essspec(N)$.

  Now consider $\lambda_j \in X = \essspec(N)$.
  In analogy with the previous paragraph, using the fact that $\big(\langle P_j e_n, e_n \rangle\big)_{n \in \Lambda_k} \in \ell^1$ when $j \not= k$ and letting $\lambda_k$ run through $X \setminus \lambda_j$, we find $\big(\langle P_j e_n, e_n \rangle\big)_{n \notin \Lambda_j} \in \ell^1$.
  Finally, for $n \in \Lambda_j$,
  \begin{equation*}
    1 - \sangles{P_j e_n, e_n} = \sum_{\substack{k=1 \\ k\not=j}}^m \sangles{P_k e_n, e_n},
  \end{equation*}
  and hence $\big(1- \sangles{P_j e_n, e_n}\big)_{n \in \Lambda_k}$ is a finite sum of absolutely summable sequences, and is therefore absolutely summable.
  Thus $\big(\sangles{P_j e_n, e_n}\big)_{n \in \Lambda_k} \in \Lim(\{1\})$, so $\big(\sangles{P_j e_n,e_n}\big) \in \Lim(\{0,1\})$.
  Therefore, by \autoref{cor:restricted-diagonalization-of-projections}, $P_j$ differs from a diagonal projection by a Hilbert--Schmidt operator.
  Since this is true of all the spectral projections of $N$, we may apply \autoref{thm:finite-spectrum-normal-restricted-diagonalizability} to conclude that $N$ is a diagonalizable by a Hilbert--Schmidt perturbation of the identity.

  (Proof of $\Leftarrow$) This implication is a direct corollary of \autoref{thm:expectation-trace-zero}.
  To see this, suppose $\essspec(N) = X$ and $N$ is diagonalizable by a unitary $U$ which is a Hilbert--Schmidt perturbation of the identity.
  Thus $UNU^{*} = \diag(x_n)$ for some sequence $x_n \in \essspec(N) = X$.
  Then by \autoref{thm:expectation-trace-zero}, $E(N-UNU^{*})$ is trace-class.
  That is, $\trace \big(E(N-UNU^{*})\big) = \sum_{n=1}^{\infty} (d_n-x_n)$ is an absolutely summable series, so $(d_n) \in \Lim(X)$. 
\end{proof}

We now establish our generalized operator-theoretic reformulation of Arveson's \autoref{thm:arveson-pythagorean} by means of \autoref{thm:expectation-trace-zero}.
After the proof we will explain how to derive \autoref{thm:arveson-pythagorean} from \autoref{thm:arveson-reformulated}.

\begin{theorem}
  \label{thm:arveson-reformulated}
  Let $N$ be a normal operator with finite spectrum.
  If $N$ is diagonalizable by a unitary which is a Hilbert--Schmidt perturbation of the identity,
  then there is a diagonal operator $N'$ with $\spec(N') \subseteq \spec(N)$ for which $E(N-N')$ is trace-class, and for any such $N'$, $\trace\big(E(N-N')\big) \in K_{\spec(N)}$.
  In particular,
  \begin{equation}
    \label{eq:trace-in-K-spec-N}
    \trace\big(E(N-N')\big) = \sum_{\lambda \in \spec(N)} [P_{\lambda}:Q_{\lambda}] \lambda,
  \end{equation}
  where $P_{\lambda},Q_{\lambda}$ are the spectral projections onto $\{\lambda\}$ of $N,N'$ respectively.
  Moreover, $P_{\lambda}-Q_{\lambda}$ is Hilbert--Schmidt for each $\lambda \in \spec(N)$.
\end{theorem}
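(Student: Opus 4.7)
The plan is to first exhibit existence concretely and then derive the formula for an arbitrary valid $N'$ by comparison with this concrete choice. For existence, set $N'' := UNU^{*}$, where $U = I+K$ with $K \in \mathcal{C}_2$ is the diagonalizing unitary from the hypothesis. Then $N''$ is diagonal with $\spec(N'') = \spec(N)$, and by \autoref{thm:expectation-trace-zero} $E(N - N'')$ is trace-class (indeed with trace zero). This operator $N''$ will also serve as the reference point for the subsequent arguments.

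Now fix any diagonal $N'$ satisfying the hypotheses. The key technical step is to prove that $N' - N''$ is actually finite rank. Since $N'' - N'$ is itself diagonal, $E(N''-N') = N''-N'$; on the other hand, $E(N''-N') = E(N-N') - E(N-N'')$ is trace-class by hypothesis and \autoref{thm:expectation-trace-zero}. But the diagonal entries of $N''-N'$ all lie in the finite set $\{\lambda-\mu : \lambda,\mu \in \spec(N)\}$, whose nonzero elements have modulus at least $\min_{\lambda \neq \mu \in \spec(N)}|\lambda - \mu| > 0$; hence compactness alone forces only finitely many entries to be nonzero, so $N''-N'$ is finite rank. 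Consequently $Q_\lambda - P_\lambda''$ is finite rank for each $\lambda$, where $P_\lambda'' := UP_\lambda U^{*}$. Expanding $P_\lambda - P_\lambda'' = -KP_\lambda - P_\lambda K^{*} - KP_\lambda K^{*} \in \mathcal{C}_2$ shows $P_\lambda - P_\lambda''$ is Hilbert--Schmidt, so $P_\lambda - Q_\lambda$ is Hilbert--Schmidt as well, handling the ``moreover'' clause.

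To compute the trace, write $\trace E(N-N') = \trace E(N-N'') + \trace E(N''-N') = 0 + \trace(N''-N') = \sum_{\lambda \in \spec(N)} \lambda\, \trace(P_\lambda'' - Q_\lambda)$. Since $P_\lambda''$ and $Q_\lambda$ are commuting diagonal projections with finite-rank difference, the Fredholm map $Q_\lambda P_\lambda'' : P_\lambda''\Hil \to Q_\lambda\Hil$ has kernel $P_\lambda''(I-Q_\lambda)\Hil$ and cokernel $Q_\lambda(I-P_\lambda'')\Hil$, both finite-dimensional, yielding $[P_\lambda'' : Q_\lambda] = \trace(P_\lambda'' - Q_\lambda)$. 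Combining $[P_\lambda : P_\lambda''] = 0$ (from \autoref{prop:restricted-conjugation-of-projections}) with \autoref{prop:essential-codimension}\ref{item:concatenation} gives $[P_\lambda : Q_\lambda] = [P_\lambda'' : Q_\lambda]$, establishing \eqref{eq:trace-in-K-spec-N}. Finally, \autoref{prop:essential-codimension}\ref{item:orthogonal-sum} applied to $\sum_\lambda P_\lambda = \sum_\lambda Q_\lambda = I$ gives $\sum_\lambda [P_\lambda : Q_\lambda] = [I : I] = 0$, placing the renormalized sum in $K_{\spec(N)}$.

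The main obstacle is the finite-rank reduction for $N' - N''$. Without finiteness of $\spec(N)$, one could at best conclude $N'-N''$ is trace-class; it is the uniform positive lower bound on spectral gaps that collapses compactness (\emph{a fortiori} trace-class) to finite rank, which is what subsequently makes each $P_\lambda - Q_\lambda$ Hilbert--Schmidt and allows each essential codimension to be evaluated as an honest trace of a finite-rank operator.
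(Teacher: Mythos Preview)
Your proof is correct and follows essentially the same route as the paper's: both take $N'' = UNU^{*}$ as the witness for existence, show that $N'' - N'$ is diagonal, trace-class, and has entries in the finite set $\spec(N)-\spec(N)$ (hence is finite rank), compute $\trace(N''-N') = \sum_\lambda \lambda\,[P''_\lambda:Q_\lambda]$, and then pass from $P''_\lambda$ to $P_\lambda$ via $[P_\lambda:P''_\lambda]=0$ and the concatenation property. Your write-up is in fact slightly more explicit than the paper's in two places: you spell out why $P_\lambda - Q_\lambda$ is Hilbert--Schmidt (the ``moreover'' clause), and you verify $\sum_\lambda [P_\lambda:Q_\lambda]=0$ to confirm membership in $K_{\spec(N)}$.
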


\begin{proof}
  Suppose $N$ is normal operator with finite spectrum which is diagonalizable by a unitary $U$ that is a Hilbert--Schmidt perturbation of the identity.
  Then by \autoref{thm:expectation-trace-zero}, $E(UNU^{*}-N)$ is trace-class with trace zero.
  Moreover, $\spec(UNU^{*}) = \spec(N)$, thereby proving that an $N'$ as in the statement exists.
  
  Now, let $N'$ be any diagonal operator with $\spec(N') \subseteq \spec(N)$ for which $E(N-N')$ is trace-class.
  Since $N'$ and $UNU^{*}$ are diagonal, we find
  \begin{equation}
    \label{eq:diagonal-split}
    UNU^{*}-N' = E(UNU^{*}-N') = E(UNU^{*}-N)+E(N-N')
  \end{equation}
  is trace-class, diagonal, and has finite spectrum contained in the set of differences $\spec(N)-\spec(N)$.
  Together, these conditions imply this operator is finite rank.
  Moreover, the (diagonal) spectral projections of $UNU^{*}, N'$, which we denote $R_{\lambda},Q_{\lambda}$, respectively for $\lambda \in \spec(N)$, each differ by a finite rank operator.
  Here we allow for the case $Q_{\lambda} = 0$ when $\lambda \in \spec(N) \setminus \spec(N')$.
  This guarantees
  \begin{equation*}
    [R_{\lambda}:Q_{\lambda}] = \trace(R_{\lambda}-Q_{\lambda}),
  \end{equation*}
  using, for example, \autoref{prop:kadison-integer-essential-codimension}; however, this formula for essential codimension holds whenever the difference of the projections is trace-class and is widely known (see for instance \cite[Theorem~4.1]{ASS-1994-JFA}, \cite[Theorem~3]{AS-1994-LAA}, or \cite[Corollary~3.3]{CP-2004-KT}).

  Therefore,
  \begin{equation}
    \label{eq:trace-ess-codim-formula}
    \trace(UNU^{*}-N') = \trace \left( \sum_{\lambda \in \spec(N)} (\lambda R_{\lambda} - \lambda Q_{\lambda}) \right) = \sum_{\lambda \in \spec(N)} [R_{\lambda} : Q_{\lambda}] \lambda.
  \end{equation}
  Moreover, we can replace $R_{\lambda}$ with $P_{\lambda}$ in the right-most side of the above display.
  Indeed, since $U$ conjugates $P_{\lambda}, R_{\lambda}$, $[P_{\lambda} : R_{\lambda}] = 0$ by \autoref{prop:restricted-conjugation-of-projections}, and furthermore $[P_{\lambda} : Q_{\lambda}] = [P_{\lambda} : R_{\lambda}] + [R_{\lambda} : Q_{\lambda}]$ by \autoref{prop:essential-codimension}(iii).

  Finally, since $\trace\big(E(UNU^{*}-N)\big) = 0$, using \eqref{eq:diagonal-split} and \eqref{eq:trace-ess-codim-formula} we find that
  \begin{equation*}
    \trace\big(E(N-N')\big) = \trace(UNU^{*}-N') = \sum_{\lambda \in \spec(N)} [P_{\lambda}:Q_{\lambda}] \lambda. \qedhere
  \end{equation*}
\end{proof}

We now illustrate how our results may be used to provide a new proof of Arveson's theorem.

\begin{proof}[Proof of \autoref{thm:arveson-pythagorean}]
  Let $X = \{\lambda_1,\ldots,\lambda_m\}$ and $d = (d_1,d_2,\ldots)$ be as in \autoref{thm:arveson-pythagorean}.
  That is, $X$ is the set of vertices of a convex polygon in $\mathbb{C}$, and $d$ satisfies
  \begin{equation*}
    \sum_{n=1}^{\infty} \abs{f(d_n)} < \infty,
  \end{equation*}
  where $f(z) = (z-\lambda_1)(z-\lambda_2)\cdots(z-\lambda_m)$.
  As we remarked after \autoref{thm:arveson-pythagorean}, this summability condition is equivalent to $d \in \Lim (X)$ by \cite[Proposition~2]{Arv-2007-PNASU}.
  Now suppose $d$ is the diagonal of an operator $N \in \mathcal{N}(X)$ (i.e., $N$ is normal with $\spec(N) = \essspec(N) = X$).
  Then by \autoref{thm:diagonalizable-by-I-plus-HS}, $N$ is diagonalizable by a unitary $U = I+K$ with $K$ Hilbert--Schmidt.
  Therefore, we may apply \autoref{thm:arveson-reformulated} to conclude that $\trace\big(E(N-N')\big) \in K_{\spec(N)} = K_X$ for some diagonal operator $N'$ with $\spec(N') \subseteq \spec(N)$ and $E(N-N')$ is trace-class.
  Finally, equation \eqref{eq:renormalized-sum-trace} of \autoref{prop:renormalized-sum-trace} establishes
  \begin{equation*}
    \sum_{n=1}^{\infty} (d_n - x_n) = \trace\big(E(N-N')\big) \in K_X
  \end{equation*}
  where $(x_n)$ is the diagonal of $N'$, so $x_n \in \spec(N') = \spec(N) = X$.
  Hence $s(d) = 0$.
\end{proof}

\begin{remark}
  \label{rem:generalize-bownik-jasper}
  In \cite{BJ-2015-TAMS}, Bownik and Jasper completely characterized the diagonals of selfadjoint operators with finite spectrum.
  A few of the results we have presented herein are generalizations of \cite[Theorem~4.1]{BJ-2015-TAMS}, which consists of some necessary conditions for a sequence to be the diagonal of a finite spectrum selfadjoint operator.
  In particular, the statement $(d_n) \in \Lim(X)$ implies $X = \essspec(N)$ of our \autoref{thm:diagonalizable-by-I-plus-HS} is an extension to finite spectrum normal operators of their corresponding result \cite[Theorem~4.1(ii)]{BJ-2015-TAMS} for selfadjoint operators.
  Similarly, our formula \eqref{eq:trace-in-K-spec-N} of \autoref{thm:arveson-reformulated} generalizes \cite[Theorem~4.1(iii)]{BJ-2015-TAMS}.
\end{remark}

We conclude with another perspective on the trace $\trace\big(E(N-N')\big)$.
Our next corollary shows that when the $\mathbb{Z}$-module $K_{\spec(N)}$ has full rank (i.e., $\rank K_{\spec(N)}$ is one less than the number of elements in the spectrum), this trace is zero if and only if $N'$ is a diagonalization of $N$ by a unitary $U = I + K$ with $K$ Hilbert--Schmidt.

\begin{corollary}
  Suppose $N$ is a normal operator with $\spec(N) = \{\lambda_1,\ldots,\lambda_m\}$ such that $\lambda_1-\lambda_2,\ldots,\lambda_1-\lambda_m$ are linearly independent in the $\mathbb{Z}$-module $K_{\spec(N)}$.
  Suppose further that $N$ is diagonalizable by a unitary which is a Hilbert--Schmidt perturbation of the identity.
  If $N'$ is a diagonal operator such that $E(N-N')$ is trace-class and $\trace\big(E(N-N')\big) = 0$, then there is a unitary $U = I+K$ with $K$ Hilbert--Schmidt such that $UNU^{*} = N'$.
\end{corollary}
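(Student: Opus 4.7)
The plan is to apply \autoref{thm:arveson-reformulated} to extract integer coefficients $c_k := [P_{\lambda_k}:Q_{\lambda_k}]$ from the vanishing trace, use the linear independence hypothesis to annihilate all of them, and then invoke \autoref{lem:restricted-conjugation-of-sets-of-projections} to produce a single Hilbert--Schmidt unitary perturbation that simultaneously conjugates the spectral projections of $N$ to those of $N'$.

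Concretely, since the conclusion $UNU^{*} = N'$ forces $\spec(N') = \spec(N)$, I would take the hypothesis to include $\spec(N') \subseteq \spec(N)$ (as in \autoref{thm:arveson-reformulated}). Denote by $P_{\lambda_k}$ and $Q_{\lambda_k}$ the spectral projections of $N$ and $N'$ onto $\{\lambda_k\}$, with the convention $Q_{\lambda_k} = 0$ when $\lambda_k \notin \spec(N')$. Applying \autoref{thm:arveson-reformulated} directly yields $P_{\lambda_k} - Q_{\lambda_k} \in \mathcal{C}_2$ for each $k$ and
\begin{equation*}
  0 = \trace\bigl(E(N-N')\bigr) = \sum_{k=1}^m c_k \lambda_k.
\end{equation*}
Meanwhile, $\sum_k P_{\lambda_k} = \sum_k Q_{\lambda_k} = I$, so \autoref{prop:essential-codimension}\ref{item:orthogonal-sum} gives $\sum_k c_k = [I:I] = 0$. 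Substituting $c_1 = -\sum_{k \geq 2} c_k$ transforms the display into $\sum_{k=2}^m c_k(\lambda_k - \lambda_1) = 0$, and the $\mathbb{Z}$-linear independence of $\lambda_1 - \lambda_2, \ldots, \lambda_1 - \lambda_m$ forces $c_k = 0$ for $k \geq 2$, and hence $c_1 = 0$ as well.

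Having $[P_{\lambda_k}:Q_{\lambda_k}] = 0$ together with $P_{\lambda_k} - Q_{\lambda_k} \in \mathcal{C}_2$ for every $k$, \autoref{lem:restricted-conjugation-of-sets-of-projections} with $\mathcal{J} = \mathcal{C}_2$ furnishes a unitary $U = I + K$, $K \in \mathcal{C}_2$, satisfying $UP_{\lambda_k}U^{*} = Q_{\lambda_k}$ simultaneously for all $k$; then $UNU^{*} = \sum_k \lambda_k Q_{\lambda_k} = N'$. The only step of real substance is the linear algebraic one where the hypothesis on $K_{\spec(N)}$ is spent: without full $\mathbb{Z}$-rank of $K_{\spec(N)}$, the two constraints $\sum_k c_k = 0$ and $\sum_k c_k \lambda_k = 0$ admit nonzero integer solutions, and one cannot conclude that the essential codimensions vanish individually.
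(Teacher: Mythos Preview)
Your proof is correct and follows essentially the same route as the paper: apply \autoref{thm:arveson-reformulated} to obtain $\sum_k [P_k:Q_k]\lambda_k = 0$ and $P_k-Q_k\in\mathcal{C}_2$, use $\sum_k [P_k:Q_k]=0$ together with the $\mathbb{Z}$-linear independence to force each essential codimension to vanish, and conclude via \autoref{lem:restricted-conjugation-of-sets-of-projections}. Your explicit justification of $\sum_k c_k=0$ via \autoref{prop:essential-codimension}\ref{item:orthogonal-sum} and your remark that the hypothesis $\spec(N')\subseteq\spec(N)$ is implicitly needed to invoke \autoref{thm:arveson-reformulated} are both on point.
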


\begin{proof}
  By \autoref{thm:arveson-reformulated}, the differences $P_k - Q_k$ are Hilbert--Schmidt and
  \begin{equation*}
    0 = \trace\big(E(N-N')\big) = \sum_{k=1}^m [P_k:Q_k]\lambda_k
  \end{equation*}
  Since $\sum_{k=1}^m [P_k:Q_k] = 0$, we have $[P_1:Q_1] = - \sum_{k=2}^m [P_k:Q_k]$ and so we may rearrange the equality above to
  \begin{equation*}
    0 = \sum_{k=2}^m [P_k:Q_k](\lambda_1 - \lambda_k).
  \end{equation*}
  Since $\lambda_1-\lambda_2,\ldots,\lambda_1-\lambda_m$ are linearly independent in $K_{\spec(N)}$, we conclude that the coefficients $[P_k:Q_k] = 0$ for $2 \le k \le m$.
  In turn, this implies $[P_1:Q_1] = 0$.
  Therefore, by \autoref{lem:restricted-conjugation-of-sets-of-projections}, there is a unitary $U = I+K$ with $K$ Hilbert--Schmidt conjugating each $P_k$ to $Q_k$.
  Thus $UNU^{*} = N'$.
\end{proof}

\emergencystretch=3em
\printbibliography

\end{document}